\providecommand{\U}[1]{\protect\rule{.1in}{.1in}}
\theoremstyle{plain}
\newcommand{\q}[1]{``#1''}
\newtheorem{corollary}{Corollary}
\newtheorem{definition}{Definition}
\newtheorem{example}{Example}
\newtheorem{lemma}{Lemma}
\newtheorem{proposition}{Proposition}
\newtheorem{theorem}{Theorem}
\numberwithin{equation}{section}
\begin{document}
\title[{\normalsize ON WEAKLY 1-ABSORBING PRIME IDEALS}]{{\normalsize ON WEAKLY 1-ABSORBING PRIME IDEALS}}

\author{Suat Ko\c{c}}
\address{Department of Mathematics, Marmara University, Istanbul, Turkey.}
\email{suat.koc@marmara.edu.tr}
\author{\"{U}nsal Tekir}
\address{Department of Mathematics, Marmara University, Istanbul, Turkey.}
\email{utekir@marmara.edu.tr}
\author{Eda Y{\i}ld{\i}z}
\address{Department of Mathematics, Yildiz Technical University, Istanbul, Turkey.}
\email{edyildiz@yildiz.edu.tr}

\subjclass[2000]{13A15, 13C05, 54C35}
\keywords{weakly prime ideal, 1-absorbing prime ideal, weakly 2-absorbing ideal, weakly
1-absorbing prime ideal, trivial extension, rings of continuous functions.}

\begin{abstract}
This paper introduce and study weakly 1-absorbing prime ideals in
commutative rings. Let $A\ $be a commutative ring with a nonzero identity
$1\neq0.$\ A proper ideal $P\ $of $A\ $is said to be a weakly 1-absorbing
prime ideal if for each nonunits $x,y,z\in A\ $with $0\neq xyz\in P,\ $then
either $xy\in P$ or $z\in P.\ $In addition to give many properties and
characterizations of weakly 1-absorbing prime ideals, we also determine rings
in which every proper ideal is weakly 1-absorbing prime. Furthermore, we
investigate weakly 1-absorbing prime ideals in $C(X)$, which is the ring of continuous
functions of a topological space $X.\ $

\end{abstract}
\maketitle

\section{Introduction}

Throughout the paper, we focus only on commutative rings with a nonzero
identity and nonzero unital modules. Let $A\ $will always denote such a ring
and $M\ $denote such an $A$-module. The concept of prime ideals and its
generalizations have a significiant place in commutative algebra since they
are used in understanding the structure of rings. Recall that a proper ideal
$P\ $of $A\ $is said to be a \textit{prime ideal }if whenever $xy\in P$ for
some $x,y\in A,\ $then either $x\in P$ or $y\in P$ \cite{AtMac}. The set of
all prime ideals and all maximal ideals will be denoted by $Spec(A)\ $and
$Max(A),\ $respectively. Also, $reg(A)\ $and $u(A)\ $always denote the set of
all regular elements and the set of all units in $A.\ $The importance of prime
ideals led many researchers to work prime ideals and its generalizations. See,
for example, \cite{Ba2}, \cite{Be} and \cite{KoUlTe}. In 2002, Anderson and
Smith in \cite{AnSmi} defined weakly prime ideals which is a generalization of
prime ideals and they used it to study factorization in commutative rings with
zero divisors. A proper ideal $P$ of $A\ $is said to be a weakly prime if
$0\neq xy\in P\ $for each $x,y\in A\ $implies either $x\in P$ or $y\in P.\ $It
is clear that every prime ideal is weakly prime but the converse is not true
in general. For instance, let $k\ $be a field and $A=k[X,Y]/(X^{2}%
,XY,Y^{2}).\ $Then $P=(X,Y^{2})$ is a nonzero weakly prime that is not prime.
Afterwards, Badawi,in his celebrated paper \cite{Ba1}, introduced the notion of
2-absorbing ideals and used them to characterize Dedekind domains. Recall from
\cite{Ba1}, that a nonzero proper ideal $P$ of $A\ $is said to be a
\textit{2-absorbing ideal} if $xyz\in P$ implies either $xy\in P$ or $xz\in P$
or $yz\in P$ for each $x,y,z\in A.\ $Note that every prime ideal is also a
2-absorbing ideal and $P=6%
\mathbb{Z}
$ is an example of 2-absorbing ideal that is not prime in $\mathbb{Z}$, which is the ring of integers. 
The notion of 2-absorbing ideal has been focus of attention
for many researchers, and so many generalizations of 2-absorbing ideals have
been studied. See, for example, \cite{BaTeYe}, \cite{TeKoOrSh} and
\cite{KoUrTe}. Afterwards, Badawi and Darani in \cite{BaDa} defined and
studied weakly 2-absorbing ideals which is a generalization of weakly prime
ideals. A proper ideal $P\ $of $A\ $is said to be a weakly 2-absorbing ideal
if for each $x,y,z\in A\ $with $0\neq xyz\in P,\ $then either we have $xy\in
P\ $or $xz\in P$ or $yz\in P.\ $Recently, Yassine et al. defined a new class
of ideals, which is an intermediate class of ideals between prime ideals and
2-absorbing ideals. Recall from \cite{YasNik} that a proper ideal $P$ of
$A\ $is said to be a \textit{1-absorbing prime ideal} if for each nonunits
$x,y,z\in A\ $with $xyz\in P$, then either $xy\in P$ or $z\in P.\ $Note that
every prime ideal is 1-absorbing prime and every 1-absorbing prime ideal is
2-absorbing ideal. The converses are not true. For instance, $P=6\mathbb{Z}$
 is a 2-absorbing ideal of $\mathbb{Z}$ but not a 1-absorbing prime ideal and also $P=(\overline{0})$ is a
1-absorbing prime ideal of $\mathbb{Z}_4$ which is not prime. Our aim in this paper is to introduce and study
weakly 1-absorbing prime ideal. A proper ideal $P\ $of $A\ $is called a
\textit{weakly 1-absorbing prime ideal} if for each nonunits $x,y,z\in A$ with
$0\neq xyz\in P,\ $then either $xy\in P$ or $z\in P.\ $Among other results in
this paper, in Section 2, we investigate the relations between weakly
1-absorbing prime ideal and other classical ideals such as weakly prime
ideals, weakly 2-absorbing ideals, 1-absorbing prime ideals (See, Example
\ref{exw}-\ref{ex6}).\ Also, we investigate the behaviour of weakly
1-absorbing prime ideals under homomorphisms, in factor rings, in rings of
fractions, in trivial extension $A\ltimes M,$ in cartesian product of rings
(See, Theorem \ref{thom}, Theorem \ref{tfac}, Theorem \ref{tloc}, Theorem
\ref{ttri} and Theorem \ref{tcar}). We give various counter examples associated with the stability of weakly 1-absorbing prime ideals in these algebraic structures 
(See, Example \ref{exco1} and Example \ref{exco2}). In Section 3, we
investigate the rings over which all proper ideals are weakly 1-absorbing
prime. We show that, in Proposition \ref{pc1}, if $A\ $is a ring whose all
proper ideals are weakly 1-absorbing prime, then either $Jac(A)^{2}=0$ or
$Jac(A)=(0:Jac(A)^{2}),$ where $Jac(A)$ is the Jacobson radical of $A$. By
using this result, we characterize rings whose all proper ideals are weakly
1-absorbing prime. In particular we prove that every proper ideal of a ring
$A\ $is weakly 1-absorbing prime if and only if either $(A,\mathfrak{m})\ $is
quasi-local with $\mathfrak{m}^{3}=(0)$ or $A=F_{1}\times F_{2},$ where
$F_{1}$ and $F_{2}$ are fields (See, Theorem \ref{tring}). We dedicate the Section 4 to the study of weakly 1-absorbing prime ideals and weakly prime
ideals in $C(X)$, which is the ring of real valued continuous functions on a topological
space $X.\ $We show that weakly prime $z$-ideals and weakly 1-absorbing
$z$-ideals are coincide in $C(X)\ $(See, Theorem \ref{tcon}).
\section{Characterization of weakly 1-absorbing prime ideals}

\begin{definition}
Let $A\ $be a ring and $P$ be a proper ideal of $A.\ P\ $is said to be a weakly
1-absorbing prime ideal if $0\neq xyz\in P\ $for some nonunits $x,y,z \in A$, then
either $xy\in P\ $or $z\in P.\ $
\end{definition}

\begin{example}
\label{exw}Every weakly prime ideal is also a weakly 1-absorbing prime ideal.
\end{example}

\begin{example}
\textbf{(Weakly 1-absorbing prime ideal that is not weakly prime)} Let $A=%
\mathbb{Z}
_{12}\ $and $P=(\overline{4}).\ $Since $\overline{0}\neq\overline{2}%
.\overline{2}\in P\ $and $\overline{2}\notin P,\ P\ $is not a weakly prime
ideal of $A.\ $Now, we will show that $P\ $is a weakly 1-absorbing prime ideal
of $A.\ $To see this, let $\overline{0}\neq\overline{x}\overline{y}%
\overline{z}\in P$ for some nonunits $\overline{x},\overline{y},\overline
{z}\ $in $A.\ $Assume that $\overline{x}\overline{y}\notin P\ $and
$\overline{z}\notin P.\ $Since $\overline{x}\overline{y}\overline{z}\in
P,\ $we have $4|xyz,\ 4\nmid xy$ and $4\nmid z.\ $Without loss of generality,
we may assume that $2\nmid x.\ $Since $4|xyz,\ $we have $2|y\ $and
$2|z.\ $Since $\overline{x}$ is not unit in $A\ $and $2\nmid x,\ $we have
$3|x.\ $Then we conclude that $\overline{x}\overline{y}\overline{z}%
=\overline{0}$ which is a contradiction. Therefore, $P\ $is a weakly
1-absorbing prime ideal of $A.\ $
\end{example}

\begin{example}
Every weakly 1-absorbing prime ideal is also a weakly 2-absorbing ideal. To
see this, let $P\ $be a weakly 1-absorbing prime ideal of a ring $A.\ $Choose
$x,y,z\ $in $A\ $such that $0\neq xyz\in P.\ $If at least one of the
$x,y,z\ $is unit, then we are done. So assume that $x,y,z$ are nonunits in
$A.\ $Since $P\ $is a weakly 1-absorbing prime ideal, we have $xy\in P\ $or
$z\in P.\ $Thus we have either $xy\in P\ $or $xz\in P\ $or $yz\in P.\ $
\end{example}

\begin{example}
\textbf{(Weakly 2-absorbing ideal that is not weakly 1-absorbing prime)}
Consider the ring $A=%
\mathbb{Z}
_{30}\ $and the ideal $P=(\overline{6}).\ $It is clear that $P\ $is a weakly
2-absorbing ideal of $A.\ $Since $\overline{0}\neq\overline{2}.\overline
{2}.\overline{3}\in P,\ \overline{2}.\overline{2}\notin P\ $and $\overline
{3}\notin P,\ $it follows that $P\ $is not a weakly 1-absorbing prime ideal of
$A.$
\end{example}

\begin{example}
\label{ex1abs}Every 1-absorbing prime ideal is also a weakly 1-absorbing prime ideal.
\end{example}

\begin{example}
\label{ex6}\textbf{(Weakly 1-absorbing prime ideal that is not 1-absorbing
prime)} Let $R=%
\mathbb{Z}
_{pq},\ $where $p\neq q\ $are prime numbers, and let $P=(\overline{0}).\ $Then
$P\ $is clearly a weakly 1-absorbing prime ideal of $A.\ $Since $\overline
{p}.\overline{p}.\overline{q}=\overline{0},\ \overline{p}^{2}\neq\overline
{0}\ $and $\overline{q}\neq\overline{0},\ $it follows that $P\ $is not a
1-absorbing prime ideal of $A.$
\end{example}

By the above examples, we have the following diagram which clarifies the place
of weakly 1-absorbing prime ideals in $L(A)$, which is the lattice of all ideals of $A.$. Here, the arrows in the diagram are irreversible.
\newline\newline

\begin{figure}[h]
\begin{tikzpicture}
	\tikzset{vertex/.style = {shape=rectangle,draw,minimum size=1.5em}}
	\tikzset{edge/.style = {->,> = latex'}}
	\node[vertex] (a) at  (0,3) {weakly prime ideal};
	\node[vertex] (b) at  (5,3) {weakly 1-absorbing prime ideal};
	\node[vertex] (c) at  (10,3) {weakly 2-aborbing ideal};
	\node[vertex] (d) at  (0,6) {prime ideal};
	\node[vertex] (e) at  (5,6) {1-absorbing prime ideal};
	\node[vertex] (f) at  (10,6) {2-absorbing ideal};
	
	\draw[edge] (d) to (e);
	\draw[edge] (d) to (a);
	\draw[edge] (e) to (f);
	\draw[edge] (e) to (b);
	\draw[edge] (f) to (c);
	\draw[edge] (a) to (b);
	\draw[edge] (b) to (c);
	
	\end{tikzpicture}
\end{figure}

\vspace{0.5cm}

\begin{theorem}
\label{tred}Suppose that $P\ $is a weakly 1-absorbing prime ideal of a reduced
ring $A.\ $Then $\sqrt{P}\ $is a weakly prime ideal of $A.\ $In particular,
$(P:x)\ $is a weakly prime ideal for each $x\in reg(A)-(P\cup u(A)).\ $
\end{theorem}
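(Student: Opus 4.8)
The plan is to verify the defining condition of a weakly prime ideal for $\sqrt{P}$ directly, exploiting that in a reduced ring no nonzero element has a zero power. So suppose $0 \neq ab \in \sqrt{P}$; I must produce $a \in \sqrt{P}$ or $b \in \sqrt{P}$. First I would dispose of the case where $a$ or $b$ is a unit: if, say, $a \in u(A)$, then $b = a^{-1}(ab) \in \sqrt{P}$ since $\sqrt{P}$ is an ideal, and we are done. Hence I may assume $a, b$ are both nonunits, whence every power $a^{i}, b^{j}$ with $i,j \geq 1$ is again a nonunit. Because $ab \in \sqrt{P}$ we have $(ab)^{n} \in P$ for some $n$, and since $P$ is an ideal we may enlarge $n$ and assume $n \geq 2$; moreover $A$ reduced and $ab \neq 0$ force $(ab)^{m} \neq 0$ for every $m \geq 1$. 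Thus $0 \neq a^{n} b^{n} \in P$.

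If $b \in \sqrt{P}$ we are done, so assume $b \notin \sqrt{P}$; then $b^{j} \notin P$ for all $j$, and I will deduce $a \in \sqrt{P}$. The heart of the argument is a descending induction that peels off one factor of $b$ at a time. For $2 \leq k \leq n$, given $0 \neq a^{n} b^{k} \in P$, apply the weakly 1-absorbing prime condition to the triple of nonunits $(a^{n})(b^{k-1})(b)$: since $b \notin P$, the alternative $b \in P$ is excluded, so $a^{n} b^{k-1} \in P$, and this element is nonzero because multiplying it by $b^{\,n-k+1}$ returns $a^{n} b^{n} \neq 0$. Iterating from $k = n$ down to $k = 2$ yields $0 \neq a^{n} b \in P$. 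At this final step the natural grouping $(a^{n})(b^{0})(b)$ is illegal, since $b^{0} = 1$ is a unit; this is the one place that needs care, and it is handled by regrouping as $(a^{n-1})(a)(b)$, a genuine triple of nonunits because $n \geq 2$. Applying the condition once more and again discarding $b \in P$, we get $a^{n} \in P$, i.e.\ $a \in \sqrt{P}$. This proves $\sqrt{P}$ is weakly prime; note the reducedness of $A$ is exactly what keeps all the intermediate products nonzero so that the weakly 1-absorbing hypothesis stays applicable.

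For the ``in particular'' clause, fix $x \in reg(A) - (P \cup u(A))$. Since $x \notin P$ we have $(P:x) \neq A$, so it is a proper ideal. To check it is weakly prime, take $0 \neq ab \in (P:x)$, i.e.\ $ab \neq 0$ and $abx \in P$; as $x$ is regular, $ab \neq 0$ gives $abx \neq 0$, so $0 \neq abx \in P$. If $a$ (resp.\ $b$) is a unit, then $bx = a^{-1}(abx) \in P$ (resp.\ $ax = b^{-1}(abx) \in P$), placing $b$ (resp.\ $a$) in $(P:x)$. Otherwise $a, b, x$ are all nonunits, and applying the weakly 1-absorbing prime condition to $(x)(b)(a)$ gives $xb \in P$ or $a \in P$; the former says $b \in (P:x)$ and the latter gives $ax \in P$, i.e.\ $a \in (P:x)$. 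In every case $a \in (P:x)$ or $b \in (P:x)$, so $(P:x)$ is weakly prime. I expect the only genuine obstacle in the whole proof to be the bookkeeping of nonzero-ness and nonunit-ness of the various powers and products; the algebraic content is a single clean application of the 1-absorbing condition, with the excluded variable chosen so that its membership in $P$ contradicts $b \notin P$ or $x \notin P$.
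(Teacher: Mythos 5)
Your proof is correct, and its two halves deserve separate comments. For the radical part you take a genuinely different route from the paper. The paper's argument is a one-shot application of the defining condition: from $x^{n}y^{n}\in P$ it multiplies by a further factor $x^{n}$ (allowed since $P$ is an ideal), observes that reducedness together with $xy\neq 0$ forces $x^{n}x^{n}y^{n}\neq 0$, and then applies the weakly 1-absorbing property to the single triple of nonunits $(x^{n},x^{n},y^{n})$, so that $x^{2n}\in P$ or $y^{n}\in P$ drops out immediately, i.e.\ $x\in\sqrt{P}$ or $y\in\sqrt{P}$. Your descending induction reaches the same conclusion by peeling off one factor of $b$ at a time from $0\neq a^{n}b^{n}\in P$, using the excluded alternative $b\notin P$ at each stage and the regrouping $(a^{n-1})(a)(b)$ at the last step (which is precisely why you must enlarge $n$ to at least $2$); the bookkeeping is handled correctly, since reducedness keeps every intermediate product $a^{n}b^{k}$ nonzero and powers of nonunits stay nonunits. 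What the paper's decomposition buys is economy: one application of the hypothesis, no induction, no special final step. Your version costs $n$ applications but needs no cleverness in choosing the triple, only the observation that the excluded factor can always be taken to be $b$ until the very end. For the colon-ideal part your argument is essentially the paper's (apply the condition to the triple containing $x$, using regularity of $x$ to keep $abx\neq 0$), with the small bonus that you explicitly verify that $(P:x)$ is proper and dispose of the unit cases, two points the paper passes over in silence.
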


\begin{proof}
Suppose that $P\ $is a weakly 1-absorbing prime ideal and $A\ $is a reduced
ring. Let $0\neq xy\in\sqrt{P}\ $for some $x,y\in A.\ $If $x\ $is unit, then
clearly we have $y=x^{-1}(xy)\in\sqrt{P}.\ $So assume that $x,y\ $are nonunits
in $A.\ $Since $xy\in\sqrt{P},\ $there exists $n\in%
\mathbb{N}
$ such that $x^{n}y^{n}\in P.\ $This implies that $x^{n}x^{n}y^{n}\in P.\ $As
$A\ $is a reduced ring and $xy\neq0,\ $we have $x^{n}x^{n}y^{n}\neq0.\ $Since
$P\ $is a weakly 1-absorbing prime and $0\neq x^{n}x^{n}y^{n}\in P,\ $we
obtain either $x^{n}x^{n}=x^{2n}\in P$ or $y^{n}\in P.\ $Therefore, we have
$x\in\sqrt{P}\ $or $y\in\sqrt{P}\ $so that $\sqrt{P}\ $is a weakly prime ideal
of $A.\ $On the other hand, choose an element $x\in reg(A)-(P\cup u(A)).\ $We will show that
$(P:x)\ $is a weakly prime ideal. Let $0\neq yz\in(P:x)\ $for some $y,z\in
A.\ $Here, we may assume that $y$ and $z$ are nonunits in $A.\ $Since $x\in
reg(A)\ $and $0\neq yz\in(P:x),\ $we have $0\neq xyz\in P.\ $Since $P\ $is
weakly 1-absorbing prime ideal, we have either $xy\in P\ $or $z\in P.\ $This
yields that $y\in(P:x)\ $or $z\in(P:x).\ $Therefore, $(P:x)\ $is a weakly
prime ideal of $A.$
\end{proof}

\begin{theorem}
\label{thom}Let $A_{1},A_{2}\ $be two commutative rings and $f:A_{1}%
\rightarrow A_{2}\ $be a ring homomorphism such that $f(1_{A_{1}})=1_{A_{2}}.\ $The following
assertions hold.

(i) If $f$ is a monomorphism, $P\ $is a weakly 1-absorbing prime ideal of
$A_{2}\ $and $f(x)\ $is nonunit in $A_{2}\ $for each nonunit $x\in A_{1}%
,\ $then $f^{-1}(P)\ $is a weakly 1-absorbing prime ideal of $A_{1}.$

(ii)\ If $f$ is an epimorphism and $P^{\star}\ $is a weakly 1-absorbing prime
ideal of $A_{1}\ $such that $Ker(f)\subseteq P^{\star},\ $then $f(P^{\star
})\ $is a weakly 1-absorbing prime ideal of $A_{2}.\ $
\end{theorem}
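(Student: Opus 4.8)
The plan is to handle the two parts separately, in each case first checking that the candidate ideal is proper, and then verifying the defining factorization property by transporting a triple of nonunits across $f$ (or its inverse) while controlling two things: that the relevant product remains nonzero, and that nonunits stay nonunits.

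For part (i), I would start by noting that $f^{-1}(P)$ is a proper ideal: it cannot be all of $A_1$, since $1_{A_1}\in f^{-1}(P)$ would force $1_{A_2}=f(1_{A_1})\in P$, contradicting that $P$ is proper. Then I would take nonunits $x,y,z\in A_1$ with $0\neq xyz\in f^{-1}(P)$ and push them forward. Applying $f$ gives $f(x)f(y)f(z)=f(xyz)\in P$, and by the standing hypothesis each $f(x),f(y),f(z)$ is a nonunit in $A_2$. The one point requiring the monomorphism hypothesis is that $xyz\neq 0$ forces $f(xyz)\neq 0$; this is exactly where injectivity is used. With $0\neq f(x)f(y)f(z)\in P$ and all three factors nonunits, the weakly $1$-absorbing prime property of $P$ yields $f(x)f(y)\in P$ or $f(z)\in P$, i.e.\ $xy\in f^{-1}(P)$ or $z\in f^{-1}(P)$, as desired.

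For part (ii), I would again begin with properness: since $f$ is surjective, $f(P^{\star})$ is an ideal, and if it were all of $A_2$ then some $p\in P^{\star}$ would satisfy $f(p)=1_{A_2}=f(1_{A_1})$, giving $p-1_{A_1}\in \ker(f)\subseteq P^{\star}$ and hence $1_{A_1}\in P^{\star}$, a contradiction. Next I would take nonunits $a,b,c\in A_2$ with $0\neq abc\in f(P^{\star})$ and lift them via surjectivity to $x,y,z\in A_1$ with $f(x)=a$, $f(y)=b$, $f(z)=c$. Here I would record the automatic fact that a preimage of a nonunit must be a nonunit (if $x$ were a unit, $f(x)=a$ would be a unit too), so $x,y,z$ are nonunits. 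The crux of this part is showing $0\neq xyz\in P^{\star}$: from $abc\in f(P^{\star})$ choose $p\in P^{\star}$ with $f(xyz)=f(p)$, whence $xyz-p\in\ker(f)\subseteq P^{\star}$ and therefore $xyz\in P^{\star}$, while $abc\neq 0$ forces $xyz\neq 0$ (else $abc=f(0)=0$). Applying the weakly $1$-absorbing prime property of $P^{\star}$ then gives $xy\in P^{\star}$ or $z\in P^{\star}$, and applying $f$ returns $ab\in f(P^{\star})$ or $c\in f(P^{\star})$.

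The hard part is not any single computation but the bookkeeping that keeps the hypothesis $0\neq xyz$ intact under the map: in (i) this is guaranteed only by injectivity, and in (ii) only by combining the kernel condition $\ker(f)\subseteq P^{\star}$ (which lets the preimage land back in $P^{\star}$) with the observation that a surjection cannot collapse a nonzero product onto a nonzero image unless the preimage is itself nonzero. Once those two safeguards are in place, the rest is a direct transfer of the defining property, and I would also flag the routine checks that nonunits correspond under $f$ in both directions as the places where the extra hypotheses of the theorem are genuinely needed.
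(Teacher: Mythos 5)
Your proposal is correct and follows essentially the same route as the paper's proof: in (i) push the triple forward through $f$, using injectivity to keep the product nonzero and the hypothesis to keep the factors nonunits; in (ii) lift the triple through the surjection and use $\ker(f)\subseteq P^{\star}$ to pull the membership back into $P^{\star}$. The extra details you supply (properness of the two ideals, the observation that preimages of nonunits are automatically nonunits, and the explicit kernel computation showing $xyz\in P^{\star}$) are points the paper leaves implicit, and they are all verified correctly.
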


\begin{proof}
(i):\ Let $0\neq xyz\in f^{-1}(P)\ $for some nonunits $x,y,z\in A_{1}.$\ Then
by the assumption, $f(xyz)=f(x)f(y)f(z)\in P$ for some nonunits $f(x),f(y),f(z)\ $%
in $A_{2}.\ $Since $f$ is monomorphism, we have $f(xyz)\neq0.\ $As $P\ $is a
weakly 1-absorbing prime ideal of $A_{2},\ $we conclude either
$f(x)f(y)=f(xy)\in P$ or $f(z)\in P\ $and this implies that $xy\in
f^{-1}(P)\ $or $z\in f^{-1}(P).\ $Hence, $f^{-1}(P)\ $is a weakly 1-absorbing
prime ideal of $A_{1}.$

(ii):\ Suppose that $0\neq x^{\prime}y^{\prime}z^{\prime}\in f(P^{\star})$ for
some nonunits $x^{\prime},y^{\prime},z^{\prime}\in A_{2}.\ $Since $f$ is
epimorphism, there exist nonunits $x,y,z\in A_{1}\ $such that $x^{\prime
}=f(x),y^{\prime}=f(y)\ $and $z^{\prime}=f(z).\ $Then we have $0\neq
f(x)f(y)f(z)=f(xyz)\in f(P^{\star}).\ $As $Ker(f)\subseteq P^{\star},\ $we get
$0\neq xyz\in P^{\star}.\ $Since $P^{\star}\ $is a weakly 1-absorbing prime
ideal of $A_{1},\ $we conclude either $xy\in P^{\star}$ or $z\in P^{\star}%
\ $and this yields that $f(xy)=x^{\prime}y^{\prime}\in f(P^{\star})\ $or
$f(z)=z^{\prime}\in f(P^{\star}).\ $Therefore, $f(P^{\star})\ $is a weakly
1-absorbing prime ideal of $A_{2}.$
\end{proof}

The following example shows that the condition \q{$f(x)$ is nonunit in $A_{2}$ for each nonunit $x\in A_{1}$} is necessary in Theorem \ref{thom}.

\begin{example}
\label{exco1}Let $p$ be a prime number and consider $A_{1}=%
\mathbb{Z}
$ and $A_{2}=%
\mathbb{Z}
_{(p)}.$ Let $f:A_{1}\rightarrow A_{2}\ $be a homomorphism defined by
$f(m)=\frac{m}{1}\ $for each $m\in A_{1}.\ $Then note that $f$ is monomorphism
and also for each prime number $q\neq p,\ $we have $f(q)=\frac{q}{1}\ $is unit
in $A_{2}\ $but $q\ $is nonunit in $A_{1}.\ $Also it is clear that $\frac
{x}{s}\in A_{2}\ $is nonunit if and only if $p|x.\ $Now, put $P=p^{2}%
\mathbb{Z}
_{(p)}.\ $Let $0\neq\frac{x}{s}\frac{y}{t}\frac{z}{u}\in P\ $for some nonunits
$\frac{x}{s},\frac{y}{t},\frac{z}{u}\in A_{2}.\ $Then $p|x,p|y\ $and
$p|z.\ $Then we get $\frac{x}{s}\frac{y}{t}\in p^{2}%
\mathbb{Z}
_{(p)}=P\ $and so $P\ $is a weakly 1-absorbing prime ideal of $A_{2}.\ $On the
other hand, note that $f^{-1}(P)=p^{2}%
\mathbb{Z}
$.\ Since $0\neq pqp=p^{2}q\in f^{-1}(P)\ $and $pq,p\notin f^{-1}(P),\ $we
have $f^{-1}(P)\ $is not weakly 1-absorbing prime ideal of $A_{1}.$
\end{example}

\begin{theorem}
\label{tfac}(i)\ Let $Q\subseteq P\ $be two ideals of $A.\ $If $P\ $is a
weakly 1-absorbing prime ideal of $A,\ $then $P/Q\ $is a weakly 1-absorbing
prime ideal of $A/Q.$

(ii)\ Let $Q\subseteq P\ $be two proper ideals of $A$ such that
$u(A/Q)=\{x+Q:x\in u(A)\}.\ $If $Q\ $is a weakly 1-absorbing prime ideal of
$A\ $and $P/Q\ $is a weakly 1-absorbing prime ideal of $A/Q,\ $then $P$ is a
weakly 1-absorbing prime ideal of $A.$

(iii) If zero ideal is 1-absorbing prime ideal of $A\ $and $P$ is a weakly
1-absorbing prime ideal of $A,\ $then $P\ $is a 1-absorbing prime ideal of A.
\end{theorem}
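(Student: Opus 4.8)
Parts (i) and (iii) should fall out by direct translation, so I would dispatch them first. For (i), take nonunits $\bar{x},\bar{y},\bar{z}\in A/Q$ (writing $\bar{a}=a+Q$) with $\bar{0}\neq\bar{x}\,\bar{y}\,\bar{z}\in P/Q$. Because the quotient map carries units to units, each of $x,y,z$ must be a nonunit of $A$; moreover $\overline{xyz}\in P/Q$ together with $Q\subseteq P$ forces $xyz\in P$, while $\overline{xyz}\neq\bar{0}$ forces $xyz\notin Q$ and hence $xyz\neq 0$ since $0\in Q$. Now weakly 1-absorbing primeness of $P$ gives $xy\in P$ or $z\in P$, that is, $\bar{x}\,\bar{y}\in P/Q$ or $\bar{z}\in P/Q$; note that no unit hypothesis is needed here, as nonunits of the quotient always lift to nonunits of $A$. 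For (iii), take nonunits $x,y,z$ with $xyz\in P$, this time not assuming $xyz\neq 0$: if $xyz\neq 0$ the weak hypothesis on $P$ finishes, while if $xyz=0$ then $xyz\in(0)$ and 1-absorbing primeness of $(0)$ yields $xy\in(0)\subseteq P$ or $z\in(0)\subseteq P$. Either way $P$ meets the (non-weak) 1-absorbing prime condition.

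The substance is (ii). I would start from nonunits $x,y,z\in A$ with $0\neq xyz\in P$ and split on whether $xyz\in Q$. If $xyz\in Q$, then $0\neq xyz\in Q$, and since $Q$ is itself weakly 1-absorbing prime we get $xy\in Q\subseteq P$ or $z\in Q\subseteq P$, as desired. If $xyz\notin Q$, I would pass to $A/Q$, where $\bar{0}\neq\overline{xyz}\in P/Q$, and try to apply that $P/Q$ is weakly 1-absorbing prime. This raises the decisive question of whether the images $\bar{x},\bar{y},\bar{z}$ are nonunits of $A/Q$, and it is exactly here that the hypothesis $u(A/Q)=\{x+Q:x\in u(A)\}$ is meant to intervene: it says the units of $A/Q$ are precisely the images of units of $A$, so any genuine unit appearing in the quotient can be divided out with the remaining product still lying in $P/Q$.

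The main obstacle, and the only delicate point, is that a nonunit $x$ of $A$ may nonetheless map to a unit $\bar{x}$ of $A/Q$, so that $P/Q$ cannot be applied to the triple verbatim. I would therefore organize the quotient case according to how many of $\bar{x},\bar{y},\bar{z}$ are units: if $\bar{z}$ is a unit, dividing it out gives $\overline{xy}\in P/Q$ and hence $xy\in P$; if both $\bar{x}$ and $\bar{y}$ are units, dividing out $\overline{xy}$ gives $\bar{z}\in P/Q$ and hence $z\in P$; and if all three are nonunits, $P/Q$ applies directly. The residual configuration, in which exactly one of $\bar{x},\bar{y}$ is a unit while $\bar{z}$ is a nonunit, is the crux: writing the offending unit image as $\bar{u}$ with $u\in u(A)$, so that $x=u+q$ with $q\in Q$, I would absorb every $q$-term (such as $qy$ and $qyz$) into $Q\subseteq P$ in order to replace assertions about the nonunit $x$ by assertions about the genuine unit $u$. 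Closing this last case is where I expect the real difficulty to lie, and it is here, rather than in the $xyz\in Q$ branch, that the weakly 1-absorbing primeness of $Q$ must be brought fully to bear.
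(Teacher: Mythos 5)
Your parts (i) and (iii) are correct and coincide with the paper's arguments, and your opening move in (ii) --- splitting on whether $xyz\in Q$ and invoking the weakly 1-absorbing primeness of $Q$ in the first branch --- is also exactly the paper's. The divergence, and the gap, is in the second branch. The paper disposes of it in one line: it reads the hypothesis $u(A/Q)=\{x+Q:x\in u(A)\}$ as saying that $x+Q$ is a unit of $A/Q$ \emph{if and only if} $x$ is a unit of $A$, so that $\overline{x},\overline{y},\overline{z}$ are automatically nonunits and $P/Q$ applies at once; your ``all three nonunits'' sub-case is the entire proof. Your residual case (exactly one of $\overline{x},\overline{y}$ a unit, $\overline{z}$ a nonunit) arises only because you adopt the weaker, literal set-theoretic reading --- ``every unit of $A/Q$ is the coset of some unit of $A$'' --- under which a nonunit of $A$ may still have a unit image. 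You leave that case open, so the proof as written is incomplete.

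Worse, that case is not merely difficult: under your reading of the hypothesis the statement is false, so no absorption of $q$-terms and no further use of $Q$ can close it. Take $A=\mathbb{Z}\times\mathbb{Z}_{(p)}$, $Q=\mathbb{Z}\times\{0\}$ and $P=\mathbb{Z}\times p^{2}\mathbb{Z}_{(p)}$. Then $A/Q\cong\mathbb{Z}_{(p)}$, and $u(A/Q)$ equals the set of cosets of units of $A$ (every unit $v$ of $\mathbb{Z}_{(p)}$ lifts to the unit $(1,v)$), so your reading of the hypothesis holds; $Q$ is prime, hence weakly 1-absorbing prime; and $P/Q\cong p^{2}\mathbb{Z}_{(p)}$ is weakly 1-absorbing prime by Example \ref{exco1}. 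Yet $P$ is not weakly 1-absorbing prime: for $x=(2,1)$, $y=z=(1,p)$ we have $0\neq xyz=(2,p^{2})\in P\setminus Q$, $xy=(2,p)\notin P$ and $z\notin P$ --- and here $\overline{x}$ is a unit of $A/Q$ while $\overline{y},\overline{z}$ are nonunits, which is precisely your residual configuration. The theorem is true, and the paper's one-line argument works, only under the elementwise interpretation of the unit hypothesis, which is equivalent to $1+Q\subseteq u(A)$, i.e.\ to $Q\subseteq Jac(A)$. Adopt that reading; then nonunits of $A$ map to nonunits of $A/Q$, the residual configuration cannot occur, and your ``all three nonunits'' case finishes the proof.
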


\begin{proof}
(i):\ Suppose that $P\ $is a weakly 1-absorbing prime ideal of $A.\ $Let
$\overline{0}\neq\overline{x}\overline{y}\overline{z}\in P/Q$ for some
nonunits $\overline{x},\overline{y},\overline{z}\ $in $A/Q\ $where
$\overline{x}=x+Q,\overline{y}=y+Q\ $and $\overline{z}=z+Q\ $for some
$x,y,z\in A.\ $This implies that $0\neq xyz\in P.\ $Since $x,y,z$ are nonunits
in $A\ $and $P\ $is a weakly 1-absorbing prime ideal of $A,\ $we conclude
either $xy\in P\ $or $z\in P.\ $Then we have $\overline{x}\overline{y}\in
P/Q\ $or $\overline{z}\in P/Q.\ $

(ii):\ Let $0\neq xyz\in P\ $for some nonunits $x,y,z$ in $A.\ $If $xyz\in
Q,\ $then either $xy\in Q\subseteq P\ $or $z\in Q\subseteq P,\ $because
$Q\ $is a weakly 1-absorbing prime ideal of $A.\ $So assume that $xyz\notin
Q\ $and thus $0_{A/Q}\neq\overline{x}\overline{y}\overline{z}\in P/Q$, where
$\overline{x}=x+Q,\overline{y}=y+Q\ $and $\overline{z}=z+Q.\ $Also by the
assumption, $\overline{x},\overline{y},\overline{z}\in A/Q\ $are
nonunits.\ Since $P/Q\ $is a weakly 1-absorbing prime ideal of $A/Q,\ $we get
either $\overline{x}\overline{y}=xy+Q\in P/Q\ $or $\overline{z}=z+Q\in P/Q,$
and this gives $xy\in P$ or $z\in P\ $which completes the proof.

(iii):\ Let $xyz\in P\ $for some nonunits $x,y,z\in A.\ $If $xyz\neq0,\ $then
we have either $xy\in P\ $or $z\in P$ because $P\ $is a weakly 1-absorbing
prime ideal of $A.\ $So assume that $xyz=0.\ $Since zero ideal is 1-absorbing
prime ideal, we conclude either $xy=0\in P$ or $z=0\in P$.$\ $Therefore,
$P\ $is a 1-absorbing prime ideal of $A.\ $
\end{proof}

Let $P$ be an ideal of $A.\ $Then $Z_{P}(A)\ $is the set of all elements $x\in
A\ $such that $xy\in P\ $for some $y\notin P,\ $that is, $Z_{P}(A)=\{x\in
A:xy\in P\ $for some $y\notin P\}.$

\begin{theorem}
\label{tloc}Let $A\ $be a ring and $S\subseteq A\ $a multiplicatively closed
set of $A.$

(i)\ If $P\ $is a weakly 1-absorbing prime ideal of $A\ $such that $P\cap
S=\emptyset,\ $then $S^{-1}P\ $is a weakly 1-absorbing prime ideal of
$S^{-1}A.\ $

(ii) If $S\subseteq reg(A)$,\ $u(S^{-1}A)=\{\frac{x}{s}:x\in u(A),s\in
S\}\ $and $S^{-1}P\ $is a weakly 1-absorbing prime ideal of $S^{-1}A$ with
$Z_{P}(A)\cap S =\emptyset,\ $then $P\ $is a weakly 1-absorbing prime ideal of
$A.\ $
\end{theorem}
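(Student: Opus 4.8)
The plan is to handle the two directions separately, each time transporting a product of three nonunits between $A$ and $S^{-1}A$ and invoking the relevant weakly 1-absorbing prime hypothesis; the two recurring technical points are the preservation of the \emph{nonunit} status of elements across the localization map and the preservation of the nonzero condition on the triple product.

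For (i), since $P\cap S=\emptyset$ the ideal $S^{-1}P$ is proper, so the question is meaningful. I would start with a nonzero product $\frac{x}{s}\frac{y}{t}\frac{z}{u}\in S^{-1}P$ of nonunits of $S^{-1}A$. First note that if $\frac{x}{s}$ is a nonunit of $S^{-1}A$, then $x$ is a nonunit of $A$ (otherwise $x^{-1}\in A$ would supply the inverse $\frac{sx^{-1}}{1}$). Clearing denominators from $\frac{xyz}{stu}\in S^{-1}P$ produces a $v\in S$ with $vxyz\in P$, hence $v^{3}xyz=(vx)(vy)(vz)\in P$. Since $x,y,z$ are nonunits, so are $vx,vy,vz$ (if $vx$ were a unit, then $x$ would be), and $v^{3}xyz\neq 0$ because $\frac{xyz}{stu}\neq 0$ means no element of $S$ annihilates $xyz$. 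Applying the weakly 1-absorbing prime property of $P$ to the triple $(vx)(vy)(vz)$ yields $v^{2}xy\in P$ or $vz\in P$, and in either case I push the conclusion back up to $\frac{xy}{st}=\frac{v^{2}xy}{v^{2}st}\in S^{-1}P$ or $\frac{z}{u}=\frac{vz}{vu}\in S^{-1}P$.

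For (ii), I would begin with $0\neq xyz\in P$ for nonunits $x,y,z\in A$ and pass to $\frac{x}{1},\frac{y}{1},\frac{z}{1}$ in $S^{-1}A$. The first key step is to check that these images are nonunits: if $\frac{x}{1}$ were a unit, the hypothesis $u(S^{-1}A)=\{\frac{a}{s}:a\in u(A),\ s\in S\}$ would give $\frac{x}{1}=\frac{a}{s}$ with $a\in u(A)$, so $s'sx=s'a$ for some $s'\in S$; multiplying by $a^{-1}$ and cancelling the regular element $s'$ (here $S\subseteq reg(A)$ is essential) gives $x(sa^{-1})=1$, contradicting that $x$ is a nonunit. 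Next, $S\subseteq reg(A)$ together with $xyz\neq 0$ forces $\frac{xyz}{1}\neq 0$, so $0\neq\frac{x}{1}\frac{y}{1}\frac{z}{1}\in S^{-1}P$ with all three factors nonunits, and the weakly 1-absorbing prime property of $S^{-1}P$ applies, giving $\frac{xy}{1}\in S^{-1}P$ or $\frac{z}{1}\in S^{-1}P$.

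The final step, where the last hypothesis enters, is the descent of membership: from $\frac{xy}{1}\in S^{-1}P$ I obtain $t\cdot xy\in P$ for some $t\in S$, and if $xy\notin P$ this would place $t$ in $Z_{P}(A)\cap S$, contradicting $Z_{P}(A)\cap S=\emptyset$; hence $xy\in P$, and symmetrically $\frac{z}{1}\in S^{-1}P$ forces $z\in P$. I expect the main obstacle to be precisely these two bookkeeping points in (ii): ruling out that a nonunit of $A$ becomes a unit in $S^{-1}A$ (which genuinely needs both the unit hypothesis and the regularity of $S$ to perform the cancellation), and converting localized membership back to membership in $P$ (which is exactly what $Z_{P}(A)\cap S=\emptyset$ guarantees). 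Once these are in place, both directions reduce to the routine clearing of denominators illustrated already in Theorem \ref{tloc}(i).
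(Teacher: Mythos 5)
Your proof is correct and follows essentially the same route as the paper's: clear denominators to move the nonzero triple product of nonunits between $A$ and $S^{-1}A$, check that nonunit status and the nonvanishing of the product are preserved, apply the weakly 1-absorbing prime hypothesis, and descend membership using $Z_{P}(A)\cap S=\emptyset$. The only cosmetic differences are that in (i) you symmetrize to the triple $(vx)(vy)(vz)$ while the paper uses the asymmetric triple $(ax)yz$ with a single $a\in S$, and that you spell out the nonunit verifications (and the properness of $S^{-1}P$) that the paper leaves implicit.
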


\begin{proof}
(i): Suppose that $0\neq\frac{x}{s}\frac{y}{t}\frac{z}{u}\in S^{-1}P$ for some
nonunits $\frac{x}{s},\frac{y}{t},\frac{z}{u}\in S^{-1}A.$ Then $0\neq
a(xyz)=(ax)yz\in P\ $for some $a\in S.\ $Also, note that $ax,y,z$ are nonunits
in $A.\ $Since $P\ $is a weakly 1-absorbing prime ideal of $A,\ $we have
either $axy\in P\ $or $z\in P.\ $This implies that $\frac{x}{s}\frac{y}%
{t}=\frac{axy}{ast}\in S^{-1}P\ $or $\frac{z}{u}\in S^{-1}P.\ $Therefore,
$S^{-1}P\ $is a weakly 1-absorbing prime ideal of $S^{-1}A.\ $

(ii):\ Let $0\neq xyz\in P\ $for some nonunits $x,y,z\in A.\ $Since
$S\subseteq reg(A),\ $we conclude that $0\neq\frac{x}{1}\frac{y}{1}\frac{z}%
{1}\in S^{-1}P.\ $Also by the assumption, $\frac{x}{1},\frac{y}{1},\frac{z}{1}$
are nonunits in $S^{-1}A.\ $As $S^{-1}P\ $is a weakly 1-absorbing prime ideal
of $S^{-1}A,\ $we conclude either $\frac{x}{1}\frac{y}{1}=\frac{xy}{1}\in
S^{-1}P$ or $\frac{z}{1}\in S^{-1}P.\ $Then there exists $s\in S\ $such that
$sxy\in P$ or $sz\in P.\ $We may assume that $sxy\in P.\ $If $xy\notin
P,\ $then we have $s\in Z_{P}(A)\cap S$ which is a contradiction. Thus we have
$xy\in P.\ $In other case, similarly, we get $z\in P.\ $Therefore, $P\ $is a weakly
1-absorbing prime ideal of $A.$
\end{proof}

The converse of Theorem \ref{tloc} (i) may not be true unless conditions of the part (ii) are satisfied. See the
following example.

\begin{example}
\label{exco2}Let $p,q$ be distinct prime numbers and $A=%
\mathbb{Z}
.\ $Take the multiplicatively closed set $S=%
\mathbb{Z}
-p%
\mathbb{Z}
$ and $P=p^{2}%
\mathbb{Z}
.$ Then note that $S^{-1}A=%
\mathbb{Z}
_{(p)}$, $S\subseteq reg(A)$ and $ \frac{q}{1}\ $is unit in $S^{-1}A\ $while $q$ is nonunit in
$A$. Moreover, $Z_{P}(A)=p%
\mathbb{Z}
$ and thus $Z_{P}(A)\cap S=\emptyset.\ $ By Example \ref{exco1}, $S^{-1}P=p^{2}%
\mathbb{Z}
_{(p)}$ is a weakly 1-absorbing prime ideal of $S^{-1}A$. However, $P$ is not a weakly 1-absorbing prime ideal of $A$.
\end{example}

We remind that a ring $A\ $is said to be a quasi-local if it has a unique
maximal ideal \cite{Sharp}. Otherwise, we say that $A\ $is non-quasi-local ring.

\begin{theorem}
Let $P$ be a proper ideal of a non-quasi-local ring $A\ $such that
$ann(x)\ $is not a maximal ideal for each $x\in P.\ $The following statements
are equivalent.

(i) $P\ $is a weakly prime ideal of $A.$

(ii)\ $P\ $is a weakly 1-absorbing prime ideal of $A.$
\end{theorem}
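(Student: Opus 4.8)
The implication (i) $\Rightarrow$ (ii) requires no work: it is exactly the content of Example \ref{exw}, since every weakly prime ideal is weakly 1-absorbing prime. So the entire problem is the reverse implication (ii) $\Rightarrow$ (i), and I expect the non-quasi-local hypothesis together with the condition that $ann(x)$ be non-maximal for $x \in P$ to be precisely what is consumed there.

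For (ii) $\Rightarrow$ (i), the plan is to start from $0 \neq ab \in P$ with $a,b \in A$ and show $a \in P$ or $b \in P$. If one of $a,b$ is a unit the conclusion is immediate (e.g. $a \in u(A)$ gives $b = a^{-1}(ab) \in P$), so I may assume $a,b$ are both nonunits. Arguing by contradiction, suppose $a \notin P$ and $b \notin P$. Since the weakly 1-absorbing prime hypothesis speaks about products of three nonunits, the heart of the proof is to manufacture a third nonunit $t$ which, inserted between $a$ and $b$, yields a genuinely nonzero element of $P$ while keeping $at$ outside $P$; applying (ii) to the triple $(a,t,b)$ will then force $at \in P$ or $b \in P$, contradicting both assumptions at once.

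Concretely, I would seek a nonunit $t$ with $atb \neq 0$ and $at \notin P$, that is, $t \notin ann(ab)$ and $t \notin (P:a)$. As the nonunits of $A$ are exactly $\bigcup Max(A)$, it suffices to show $\bigcup Max(A) \not\subseteq ann(ab) \cup (P:a)$. Assume otherwise. Then each $\mathfrak{m} \in Max(A)$ lies in the union of the two ideals $ann(ab)$ and $(P:a)$, and since an ideal contained in a union of two ideals must lie inside one of them, $\mathfrak{m} \subseteq ann(ab)$ or $\mathfrak{m} \subseteq (P:a)$. The first alternative is impossible: $ann(ab)$ is proper because $ab \neq 0$, so $\mathfrak{m} \subseteq ann(ab)$ would force $\mathfrak{m} = ann(ab)$, making $ann(ab)$ maximal and violating the hypothesis applied to $ab \in P$. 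Hence $\mathfrak{m} \subseteq (P:a)$ for every maximal ideal, so $\bigcup Max(A) \subseteq (P:a)$. But $a \notin P$ makes $(P:a)$ proper, whence $(P:a) \subseteq \mathfrak{m}_0$ for some $\mathfrak{m}_0 \in Max(A)$, and then every other maximal ideal is contained in $\mathfrak{m}_0$ as well, which is absurd since $A$ is non-quasi-local and has at least two distinct maximal ideals.

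This contradiction produces the required nonunit $t$, and the argument then closes quickly: $atb = t(ab) \in P$ is nonzero, $a,t,b$ are nonunits, so (ii) gives $at \in P$ or $b \in P$, contradicting $at \notin P$ and $b \notin P$. The main obstacle is precisely the covering step above—seeing that the two hypotheses combine to prevent $\bigcup Max(A)$ from being swallowed by $ann(ab) \cup (P:a)$—while the rest is routine. It is worth recording where each hypothesis is spent: the annihilator condition eliminates the $ann(ab)$ alternative, and non-quasi-locality eliminates the $(P:a)$ alternative.
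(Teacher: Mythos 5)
Your proposal is correct, and it takes a genuinely different route from the paper's. The paper argues directly on elements: from $0\neq xy\in P$ with $x,y$ nonunits, it uses the hypothesis that $ann(xy)$ is not maximal (valid since $xy\in P$) to find a maximal ideal $\mathfrak{m}_{1}$ with $ann(xy)\subsetneq\mathfrak{m}_{1}$, uses non-quasi-locality to pick a second maximal ideal $\mathfrak{m}_{2}$ and an element $z\in\mathfrak{m}_{2}-\mathfrak{m}_{1}$, and applies the weakly 1-absorbing property to $0\neq zxy\in P$ to get $zx\in P$ (or $y\in P$, in which case it stops); it then manufactures further nonunits of the form $1+rz\in\mathfrak{m}_{1}$ and, in the delicate case $1+rz\in ann(xy)$, of the form $1+rz+t$ with $t\in\mathfrak{m}_{1}-ann(xy)$, applies the property up to two more times, and uses additivity of $P$ (from $zx\in P$ and $tx\in P$) to extract $x\in P$. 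You instead run a covering argument: assuming $a,b\notin P$, you show the set of nonunits $\bigcup Max(A)$ cannot lie inside $ann(ab)\cup(P:a)$, with the annihilator hypothesis killing the alternative $\mathfrak{m}\subseteq ann(ab)$ and non-quasi-locality killing the alternative that every maximal ideal sits inside the proper ideal $(P:a)$; this yields a nonunit $t$ avoiding both ideals, and then a single application of the defining property to the triple $(a,t,b)$ gives the contradiction. Your route is shorter, needs only one invocation of the weakly 1-absorbing hypothesis instead of up to three, and makes completely transparent where each of the two standing hypotheses is spent; its only extra ingredient is the two-ideal covering lemma (an ideal contained in a union of two ideals lies in one of them), which is standard and is in fact the same fact the paper invokes in the proof of Theorem \ref{tmm}. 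The paper's route avoids that lemma and, being a direct rather than contradiction argument, actually exhibits which of the two factors lands in $P$, at the cost of a case split and more element bookkeeping. Both proofs consume exactly the same hypotheses, and both are valid.
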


\begin{proof}
$(i)\Rightarrow(ii):\ $Follows from Example \ref{exw}.

$(ii)\Rightarrow(i):\ $Let $0\neq xy\in P\ $for some $x,y\in A.\ $If $x$ or
$y$ is unit, then we have either $x\in P$ or $y\in P.\ $So assume that
$x$ and $y$ are nonunits in $A.\ $Since $xy\neq0,\ ann(xy)\ $is a proper ideal.
Then there exists a maximal ideal $\mathfrak{m}_{1}$ of $A\ $such that
$ann(xy)\subsetneq\mathfrak{m}_{1}.\ $As\ $A\ $is non-quasi-local, there
exists a maximal ideal $\mathfrak{m}_{2}\ $of $A.\ $Now, choose $z\in
\mathfrak{m}_{2}-\mathfrak{m}_{1}.\ $Then we have $z\notin ann(xy)\ $and so
$0\neq xyz=zxy\in P.\ $As $P$ is a weakly 1-absorbing prime ideal, we conclude
$zx\in P$ or $y\in P.$ If $y\in P,\ $then we are done. So assume that $zx\in
P.\ $Since $z\notin\mathfrak{m}_{1},\ $there exists $r\in A\ $such that
$1+rz\in\mathfrak{m}_{1}$ and so $1+rz$ is nonunit. Assume that $1+rz\notin
ann(xy).\ $Then we have $0\neq(1+rz)xy\in P.\ $Since $P$ is a weakly
1-absorbing prime ideal, \ we get $(1+rz)x=x+rzx\in P\ $and so $x\in P.\ $Now
assume that $1+rz\in ann(xy).\ $Choose $t\in\mathfrak{m}_{1}%
-ann(xy).\ $Then we have $1+rz+t\in\mathfrak{m}_{1}$ and so $1+rz+t\ $is
nonunit in $A.\ $On the other hand, since $0\neq txy\in P\ $and $P$ is a
weakly 1-absorbing prime ideal,\ we get $tx\in P.\ $As $P$ is a weakly
1-absorbing prime ideal and $0\neq(1+rz+t)xy\in P,\ $we obtain that
$(1+rz+t)x=x+rzx+tx\in P\ $and so $x\in P\ $which completes the proof.
\end{proof}

\begin{theorem}
\label{tmm}Let $A\ $be a commutative ring and $P\ $a proper ideal of $A.\ $The
following statements are equivalent.

(i) $P\ $is a weakly 1-absorbing prime ideal of $A.\ $

(ii)\ For each nonunits $x,y\in A$ with $xy\notin P,\ (P:xy)=P\cup(0:xy).$

(iii)\ For each nonunits $x,y\in A$ with $xy\notin P,\ $either $(P:xy)=P\ $or
$(P:xy)=(0:xy).$

(iv) For each nonunits $x,y\in A\ $and proper ideal $J$ of $A$\ such that
$0\neq xyJ\subseteq P,\ $either $xy\in P\ $or $J\subseteq P.$

(v) For each nonunit $x\in A\ $and proper ideals $I,J\ $of $A\ $such that
$0\neq xIJ\subseteq P,\ $either $xI\subseteq P\ $or $J\subseteq P.$

(vi) For each proper ideals $I,J,K\ $of $A\ $such that $0\neq IJK\subseteq
P,\ $either $IJ\subseteq P\ $or $K\subseteq P.\ $
\end{theorem}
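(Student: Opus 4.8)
The plan is to prove the chain of equivalences (i) through (vi) by establishing a cycle of implications, together with a few short cross-links. The natural skeleton is $(i)\Rightarrow(ii)\Rightarrow(iii)\Rightarrow(i)$ to close the loop among the three colon-ideal characterizations, and then $(i)\Leftrightarrow(iv)$, $(iv)\Leftrightarrow(v)$, $(v)\Leftrightarrow(vi)$ to incorporate the ideal-theoretic reformulations. The backward directions in the ideal statements are essentially free, since taking principal ideals $I=(x)$, $J=(y)$, $K=(z)$ recovers the element-wise condition; so the real content lives in the forward directions, where one must upgrade a statement about elements into a statement about whole ideals.

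First I would prove $(i)\Rightarrow(ii)$. Fix nonunits $x,y$ with $xy\notin P$. The inclusion $P\cup(0:xy)\subseteq(P:xy)$ is immediate. For the reverse inclusion, take $z\in(P:xy)$, so $xyz\in P$; if $xyz=0$ then $z\in(0:xy)$, and if $xyz\neq0$ then, since we may reduce to the case $z$ nonunit (if $z$ is a unit then $xy=xyz\cdot z^{-1}\in P$, contradicting $xy\notin P$), the weakly $1$-absorbing prime property together with $xy\notin P$ forces $z\in P$. Hence $(P:xy)=P\cup(0:xy)$. The implication $(ii)\Rightarrow(iii)$ is the standard fact that an ideal equal to a union of two ideals must equal one of them: if $(P:xy)=P\cup(0:xy)$ but equals neither, choose $a\in(P:xy)\setminus(0:xy)$ lying in $P$ and $b\in(P:xy)\setminus P$ lying in $(0:xy)$, and then $a+b$ lies in $(P:xy)$ but in neither $P$ nor $(0:xy)$, a contradiction. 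For $(iii)\Rightarrow(i)$, suppose $0\neq xyz\in P$ with $x,y,z$ nonunits and $xy\notin P$; then $z\in(P:xy)$, and since $xyz\neq0$ we have $z\notin(0:xy)$, so by (iii) the relevant colon ideal must be $P$, giving $z\in P$.

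Next I would handle the ideal reformulations. For $(i)\Rightarrow(iv)$, assume $0\neq xyJ\subseteq P$ with $x,y$ nonunits, $J$ a proper ideal, and suppose $xy\notin P$ and $J\not\subseteq P$; pick $z\in J\setminus P$, which is a nonunit since $J$ is proper. If $xyz\neq0$, then (i) yields $z\in P$, a contradiction; so $xyz=0$, and one then runs the usual ``translation'' argument: choose $w\in J$ with $xyw\neq0$ (possible since $xyJ\neq0$), and examine $xy(z+w)$ to derive the contradiction, replacing $z$ by $z+w$ which still lies outside $P$ and makes the triple product nonzero. The step $(iv)\Rightarrow(v)$ and $(v)\Rightarrow(vi)$ are the genuine workhorses: each time one expands a single element into an ideal. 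I would prove $(v)$ from $(iv)$ by fixing the nonunit $x$ and the ideals $I,J$ with $0\neq xIJ\subseteq P$, assuming $J\not\subseteq P$, and showing $xI\subseteq P$ by testing each generator of $I$ against $J$; and similarly $(vi)$ from $(v)$ by expanding $I$ into a product of two ideals.

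The main obstacle is precisely the zero-product bookkeeping in the forward ideal implications $(iv)\Rightarrow(v)\Rightarrow(vi)$. Because the hypothesis is only $0\neq(\text{product})$, not that every relevant sub-product is nonzero, one cannot apply the defining property element-by-element directly: individual products such as $xab$ for particular generators $a\in I$, $b\in J$ may vanish even though $xIJ\neq0$. The standard remedy is a careful case analysis in which, whenever a candidate triple product is zero, one perturbs one of the factors by an element guaranteed to make the product nonzero (exploiting $xIJ\neq0$) while preserving its membership status relative to $P$ and its nonunit status; summing such perturbations and using the additivity of ideals then forces the desired containment. I expect this perturbation argument, and the verification that the perturbing elements can always be chosen to be nonunits lying outside $P$, to be the most delicate part of the proof.
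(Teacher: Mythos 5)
Your proposal is correct, and most of it coincides with the paper's proof: the same colon-ideal computation for $(i)\Rightarrow(ii)$, the same union-of-two-ideals fact for $(ii)\Rightarrow(iii)$, principal ideals for the cheap backward directions, and the same perturbation arguments for $(iv)\Rightarrow(v)$ and $(v)\Rightarrow(vi)$. The genuine difference is how (iv) is reached. The paper proves a single cycle $(i)\Rightarrow(ii)\Rightarrow(iii)\Rightarrow(iv)\Rightarrow(v)\Rightarrow(vi)\Rightarrow(i)$, and its step $(iii)\Rightarrow(iv)$ requires no perturbation at all: if $xy\notin P$ and $0\neq xyJ\subseteq P$, then $J\subseteq (P:xy)$, and $(P:xy)=(0:xy)$ is impossible since it would force $xyJ=0$; hence by (iii) we get $(P:xy)=P$ and $J\subseteq P$. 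You instead prove $(i)\Rightarrow(iv)$ directly, which obliges you to run the translation argument one extra time; it does work, but note that your assertion that $z+w$ ``still lies outside $P$'' needs the intermediate observation $w\in P$ (obtained by applying (i) to $0\neq xyw\in P$ with $xy\notin P$ and $w$ nonunit), which your sketch should make explicit. So the paper's routing buys a cleaner proof of (iv), reserving the zero-product bookkeeping for the two steps where it is unavoidable; your plan for those steps --- test each generator, perturb by an element whose product with the remaining ideals is nonzero, finish by additivity --- is exactly the paper's argument, with the small wording caveat that in $(iv)\Rightarrow(v)$ the perturbation does not ``preserve membership status relative to $P$'': there one shows $x(a+a_0)\in P$ and $xa_0\in P$ and subtracts, so the conclusion comes from additivity rather than from preservation.
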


\begin{proof}
$(i)\Rightarrow(ii):$\ Suppose that $P\ $is a weakly 1-absorbing prime ideal
of $A.\ $Now, choose nonunits $x,y\in A$ with $xy\notin P.\ $Let
$z\in(P:xy).\ $If $zxy=0,\ $then we are done. So assume that $xyz\neq
0.\ $Since $xy\notin P\ $and $xyz\in P,\ $we have $z$ is not unit. As $P\ $is
weakly 1-absorbing prime, we conclude $z\in P.\ $Thus we have
$\ (P:xy)\subseteq P\cup(0:xy).$\ Since the reverse inclusion always hold, we
get $(P:xy)=P\cup(0:xy).$

$(ii)\Rightarrow(iii):$ Follows from the fact that if an ideal is a union of
two ideals, then it must be one of them.

$(iii)\Rightarrow(iv):\ $Suppose that $0\neq xyJ\subseteq P$ for some nonunits
$x,y\in A\ $and a proper ideal $J\ $of $A.\ $Assume that $xy\notin P.\ $Then
by (iii), we have either $(P:xy)=(0:xy)\ $or $(P:xy)=P.\ $Suppose
$(P:xy)=(0:xy).\ $Since $xyJ\subseteq P,\ $we get $J\subseteq(P:xy)=(0:xy)\ $%
and so $xyJ=0$ which is a contradiction. Thus $(P:xy)=P$ and this implies that
$J\subseteq(P:xy)=P\ $which completes the proof.

$(iv)\Rightarrow(v):\ $Let $0\neq xIJ\subseteq P$ for some nonunit $x\in
A\ $and proper ideals $I,J\ $of $A.\ $Assume that $xI\nsubseteq P\ $and
$J\nsubseteq P.\ $As $xI\nsubseteq P,\ $there exists $a\in I\ $such that
$xa\notin P.\ $Since $xIJ\neq0,\ $there exists $a_{0}\in I\ $such that
$xa_{0}J\neq0.\ $Now, we will show that $xaJ=0.\ $Otherwise, by (iv), we would
have $xa\in P\ $or $J\subseteq P\ $since $a$ is not unit,\ which is
contradiction. Thus $xaJ=0$ and so we have $0\neq xa_{0}J=x(a+a_{0}%
)J\subseteq P.\ $Then by\ (iv), we have $x(a+a_{0})\in P$ since $a+a_0$ is not unit. On the other hand,
since $0\neq xa_{0}J\subseteq P,\ $we have $xa_{0}\in P.\ $As $x(a+a_{0})\in
P\ $and $xa_{0}\in P,\ $we conclude that $xa\in P\ $which is a contradiction.
Therefore, $xI\subseteq P\ $or $J\subseteq P.$

$(v)\Rightarrow(vi):\ $Suppose that $0\neq IJK\subseteq P\ $for some proper
ideals $I,J,K\ $of $A.\ $Assume that $IJ\nsubseteq P$ and $K\nsubseteq
P.\ $Then there exists $y\in I\ $such that $yJ\nsubseteq P.\ $If $yJK\neq
0,\ $then we have either $yJ\subseteq P\ $or $K\subseteq P\ $which is
contradiction. Thus $yJK=0.\ $Since $IJK\neq0,$ there exists $x\in I\ $such
that $xJK\neq0.\ $As $xJK\subseteq P,\ $we conclude that $xJ\subseteq P.\ $As
$0\neq xJK=(x+y)JK\subseteq P,\ $we have $(x+y)J\subseteq P\ $and this yields
that $yJ\subseteq P\ $which is a contradiction.

$(vi)\Rightarrow(i):\ $Suppose that $0\neq xyz\in P\ $for some nonunits
$x,y,z\in A.\ $Now, put $I=xA,J=yA\ $and $K=zA.\ $Then $0\neq
IJK=xyzA\subseteq P\ $for proper ideals $I,J$ and $K$ of $A$. Then by (vi), we
have $xy\in IJ\subseteq P\ $or $z\in K\subseteq P\ $, as needed.
\end{proof}

\begin{definition}
Let $P\ $be a weakly 1-absorbing prime ideal of $A\ $and $x,y,z$ be nonunits
in $A.\ (x,y,z)\ $is said to be a \textit{1-triple zero} of $P\ $provided that
$xyz=0,\ xy\notin P$ and $z\notin P.\ $
\end{definition}

Note that a weakly 1-absorbing prime ideal $P\ $of $A\ $is not a 1-absorbing
prime ideal if and only if $P\ $has a 1-triple zero.

\begin{theorem}
\label{ttriple}Let $P\ $be a weakly 1-absorbing prime ideal of $A\ $and
$(x,y,z)$ be a 1-triple zero of $P.\ $Then,

(i)\ $xyP=0.$

(ii) If $x,y\notin(P:z),\ $then $xzP=0=yzP=xP^{2}=yP^{2}=zP^{2}.$ In
particular, $P^{3}=0.\ $
\end{theorem}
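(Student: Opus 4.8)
The plan is to prove (i) first and then bootstrap everything else from it, exploiting the symmetry of the 1-triple-zero condition in its first two coordinates together with the hypothesis $x,y\notin(P:z)$. For (i), fix $p\in P$ and suppose toward a contradiction that $xyp\neq 0$. The idea is to perturb $z$ by $p$: since $xyz=0$, we have $xy(z+p)=xyz+xyp=xyp$, which is nonzero by assumption and lies in $P$ (as $p\in P$). If I can show $z+p$ is a nonunit, then $(x,y,z+p)$ is a triple of nonunits, so the weakly 1-absorbing property forces $xy\in P$ or $z+p\in P$; the former contradicts $xy\notin P$ and the latter gives $z\in P$, again a contradiction. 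Hence $xyp=0$ for every $p\in P$, which is (i).

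The crux, and the main obstacle, is verifying that the perturbed element $z+p$ is a nonunit; this is exactly where the nonunit clause in the definition bites. I would isolate this as a small claim: if $z+p=u$ were a unit, then $z=u-p$ and $0=xyz=xyu-xyp$, so $xy=(xyp)\,u^{-1}$; since $xyp\in P$ this yields $xy\in P$, contradicting $xy\notin P$. The same computation, applied to an arbitrary 1-triple zero $(a,b,c)$ with $ab\notin P$, shows that $c+p$ is a nonunit for all $p\in P$. This single observation is what licenses every later invocation of the weakly 1-absorbing property, and it holds unconditionally in $p$, independent of any nonvanishing assumption.

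For the first three equalities in (ii), I would note that the hypotheses make the permuted triples $(x,z,y)$ and $(y,z,x)$ into 1-triple zeros: indeed $xzy=0=yzx$, the outer-product conditions $xz\notin P$ and $yz\notin P$ are precisely $x,y\notin(P:z)$, and $x,y\notin P$ follows from $xy\notin P$. Applying part (i) to these two triples gives $xzP=0$ and $yzP=0$, while the nonunit claim above (now with $c=y$ and with $c=x$) shows in addition that $y+p$ and $x+p$ are nonunits for every $p\in P$.

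The remaining identities follow by an expand-and-cancel argument from the vanishing relations already in hand. For $xP^2=0$, take $p_1,p_2\in P$ and expand $x(y+p_1)(z+p_2)$; every cross term dies by $xyz=0$, $xyP=0$, and $xzP=0$, leaving $x(y+p_1)(z+p_2)=xp_1p_2$. If $xp_1p_2\neq0$, this is a nonzero element of $P$ whose three factors $x,\,y+p_1,\,z+p_2$ are all nonunits, so $x(y+p_1)\in P$ or $z+p_2\in P$, and each case contradicts $xy\notin P$ or $z\notin P$; hence $xP^2=0$, and symmetrically $yP^2=0$. The same scheme gives $zP^2=0$ from $z(x+p_1)(y+p_2)=zp_1p_2$, and finally $P^3=0$ from $(x+p_1)(y+p_2)(z+p_3)=p_1p_2p_3$, where all eight expanded terms except $p_1p_2p_3$ vanish by the seven relations $xyz=0$, $xyP=xzP=yzP=0$, and $xP^2=yP^2=zP^2=0$. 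In each case the nonunit claim supplies exactly the nonunits needed before the weakly 1-absorbing property is applied.
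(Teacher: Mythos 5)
Your proof is correct, and its overall skeleton --- perturb by elements of $P$, check that the perturbed factors are nonunits, invoke the weakly 1-absorbing property, and bootstrap each vanishing relation from the ones already established --- is the same as the paper's. The one place you genuinely diverge is the derivation of $xzP=0=yzP$: you observe that the hypotheses $x,y\notin(P:z)$, together with $xy\notin P$ (which forces $x,y\notin P$), say precisely that the permuted triples $(x,z,y)$ and $(y,z,x)$ are themselves 1-triple zeros, and you then just apply part (i) to them. The paper instead re-runs the perturbation argument directly, and its printed version of that step is in fact garbled: it asserts $xy(z+p)=xzp$, which is false as written (one has $xy(z+p)=xyp$, and this is $0$ by part (i)); the intended product is $x(y+p)z=xzp$, with the subsequent unit and membership conclusions adjusted accordingly. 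So your permutation trick is not only shorter but sidesteps the one defective passage in the paper's own proof. Your isolation of a single reusable nonunit lemma (for any 1-triple zero $(a,b,c)$ of $P$ and any $p\in P$, the element $c+p$ is a nonunit, since otherwise $ab=(abp)(c+p)^{-1}\in P$) is likewise a clean abstraction of a fact the paper re-derives inline each of the several times it is needed. The remaining identities $xP^{2}=yP^{2}=zP^{2}=0$ and $P^{3}=0$ are obtained by the same expand-and-cancel computations as in the paper, with the same contradictions at the end.
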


\begin{proof}
$(i):\ $Suppose that $P\ $is a weakly 1-absorbing prime ideal of $A\ $and
$(x,y,z)$ is a 1-triple zero of $P.$ Assume that $xyP\neq0.\ $Then there
exists $p\in P\ $such that $0\neq xyp.\ $Since $(x,y,z)$ is a 1-triple zero of
$P,\ xyz=0,\ xy\notin P\ $and $z\notin P.\ $This implies that $0\neq
xyp=xy(z+p)\in P.\ $Since $xy\notin P,\ z+p\ $is not unit. As $P\ $is a weakly
1-absorbing prime ideal of $A,\ $we conclude that $z+p\in P\ $and so $z\in
P\ $which is a contradiction. Therefore, $xyP=0.$

$(ii):\ $Let$\ x,y\notin(P:z).\ $Then $xz,yz\notin P.\ $Now, take $p\in
P.\ $Since $xyz=0,\ $we have $xy(z+p)=xzp\in P.\ $If $z+p\ $is unit, we get
$xy\in P\ $which is a contradiction. Thus $z+p\ $is nonunit. If $xzp\neq
0,\ $we conclude $0\neq xy(z+p)\in P.\ $As $P\ $is a weakly 1-absorbing
prime ideal of $A,\ $we get $z+p\in P$ and so $z\in P\ $which is a
contradiction. Therefore, $xzp=0\ $and this yields $xzP=0.\ $One can
similarly show that $yzP=0.\ $Now, we will show that $xP^{2}=0.\ $Assume that
$xP^{2}\neq0.\ $Then there exists $q,p\in P\ $such that $xpq\neq0.\ $Then we
have $0\neq xpq=x(y+p)(z+q)=xyz+xyq+xzp+xpq\in P\ $since $xyz=0,\ xyP=0\ $and
$xzP=0.\ $If $y+p\ $is unit, then $x(z+q)\in P\ $and so $xz\in P\ $which is a
contradiction. Thus $y+p\ $is nonunit. Likewise, $z+q\ $is nonunit. As $P\ $is
a weakly 1-absorbing prime ideal, we conclude that either $x(y+p)\in P\ $or
$z+q\in P\ $and this yields that $xy\in P\ $or $z\in P\ $which are both
contradictions. Therefore, $xP^{2}=0.\ $Similar argument shows that
$yP^{2}=zP^{2}=0.\ $

Now, we will show that $P^{3}=0.\ $Suppose to the contrary. Then there exists
$p,q,r\in P\ $such that $pqr\neq0.\ $Then we have $(p+x)(q+y)(r+z)=pqr$ since
$zP^{2}=0=yP^{2}=yzP=xP^{2}=xzP=xyP\ $and $xyz=0.\ $This implies that $0\neq
pqr=(p+x)(q+y)(r+z)\in P.\ $If $(p+x)\ $is unit, then $(q+y)(r+z)\in P.\ $As
$q,r\in P,\ $we get $yz\in P\ $which is a contradiction. Thus $(p+x)\ $is
nonunit. Similarly, $(q+y),(r+z)\ $are not units in $A.\ $Since $P\ $is a
weakly 1-absorbing prime ideal of $A,\ $we conclude that $(p+x)(q+y)\in P\ $or
$r+z\in P\ $and thus we have $xy\in P\ $or $z\in P,\ $again a contradiction.
Hence $P^{3}=0.\ $
\end{proof}

\begin{theorem}
(i) Let $P\ $be a weakly 1-absorbing prime ideal of a reduced ring $A\ $that
is not 1-absorbing prime$.\ $Suppose that $(x,y,z)\ $is a 1-triple zero of
$P\ $such that $x,y\notin(P:z).\ $Then $P=0.\ $

(ii)\ Let $P\ $be a nonzero weakly 1-absorbing prime ideal of\ a reduced ring
$A$ that is not 1-absorbing prime ideal$.\ $If $(x,y,z)\ $is a 1-triple zero
of $P,\ $then $xz\in P\ $or $yz\in P.\ $
\end{theorem}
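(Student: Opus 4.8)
The plan is to leverage Theorem \ref{ttriple} directly for part (i) and then obtain part (ii) as a quick consequence by contraposition.

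For part (i), since $(x,y,z)$ is a 1-triple zero of $P$ with $x,y\notin(P:z)$, I would apply Theorem \ref{ttriple}(ii) to conclude immediately that $P^{3}=0$. The crucial observation is then that $P^{3}=0$ in a reduced ring forces $P=0$: for any $p\in P$, the product $p\cdot p\cdot p$ lies in $P^{3}=0$, so $p^{3}=0$ and $p$ is nilpotent; since $A$ is reduced this gives $p=0$, and hence $P=0$. This is the entire content of the first part once Theorem \ref{ttriple}(ii) is in hand.

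For part (ii), I would argue by contraposition. Suppose that neither $xz\in P$ nor $yz\in P$ holds, that is, $x\notin(P:z)$ and $y\notin(P:z)$. Then $(x,y,z)$ is a 1-triple zero of $P$ satisfying exactly the hypotheses of part (i), so part (i) yields $P=0$. This contradicts the standing assumption that $P$ is nonzero, and therefore $xz\in P$ or $yz\in P$, as claimed.

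The only genuinely substantive point is the passage from $P^{3}=0$ to $P=0$ in the reduced setting, and even this is a one-line nilpotency argument; everything else is supplied by Theorem \ref{ttriple}(ii) and the short contrapositive step. I therefore do not expect a real obstacle here, since the heavy lifting---deriving $P^{3}=0$ from the existence of a 1-triple zero with $x,y\notin(P:z)$---has already been completed in Theorem \ref{ttriple}.
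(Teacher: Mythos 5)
Your proposal is correct and follows essentially the same route as the paper: both deduce $P^{3}=0$ from Theorem \ref{ttriple} and then invoke reducedness for part (i), and both obtain part (ii) by noting that $xz,yz\notin P$ means $x,y\notin(P:z)$, so part (i) would force $P=0$, contradicting $P\neq 0$. The only differences are cosmetic: you spell out the one-line nilpotency step ($p^{3}=0\Rightarrow p=0$) that the paper leaves implicit, and you correctly cite part (ii) of Theorem \ref{ttriple}, whereas the paper's reference to a part ``(iii)'' is a typographical slip.
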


\begin{proof}
(i):\ Let $P\ $be a weakly 1-absorbing prime ideal that is not 1-absorbing
prime ideal. Suppose that $(x,y,z)\ $is a 1-triple zero of $P\ $with
$x,y\notin(P:z).\ $Then by Theorem \ref{ttriple} (iii),\ $P^{3}=0.\ $Since
$A\ $is reduced, we get $P=0.\ $

(ii): Let $(x,y,z)\ $be a 1-triple zero of $P.\ $If $xz\ $and $yz\notin
P,\ $then $x,y\notin(P:z)\ $so by (i), we conclude that $P=0\ $which is a contradiction.
\end{proof}

Let $A\ $be a ring and $M$ be an $A$-module. The \textit{trivial extension}
(or sometimes called Nagata's idealization) $A\ltimes M=A\oplus M$ is a
commutative ring with componentwise addition and the multiplication given by
$(x,m)(y,m^{\prime})=(xy,xm^{\prime}+ym)\ $for each $x,y\in A;m,m^{\prime}\in
M\ $\cite{Na},\ \cite{AnWi}.

For any $A$-module $M,\ $the set of annihilators of $M$ is denoted by
$ann(M)=\{x\in A:xM=(0)\}$. For an ideal $P$ of $A$ and a submodule $N$ of $M$  The set $P\ltimes N\ $is not always an ideal of
$A\ltimes M\ $and it is an ideal in trivial extension if and only if
$PM\subseteq N$ \cite[Theorem 3.1]{AnWi}$.\ $The authors showed in \cite{AnWi}
that every prime and maximal ideal $J$ of $A\ltimes M\ $has the form
$J=P\ltimes M$,\ where $P\ $is prime and maximal ideal, respectively. Also,
Anderson and Smith in \cite{AnSmi} determine when $P\ltimes M$ is a weakly
prime ideal in $A\ltimes M.\ $Now, we will give a similar result for weakly
1-absorbing prime ideals.

\begin{theorem}
\label{ttri}Let $P$ be a proper ideal of a ring $A$ and $M\ $ be an $A$%
-module$.\ $The following statements are equivalent.

(i)\ $P\ltimes M\ $is a weakly 1-absorbing prime ideal of $A\ltimes M.$

(ii)\ $P\ $is a weakly 1-absorbing prime ideal of $A\ $and if $xyz=0\ $for
some nonunits $x,y,z\in A\ $with $xy\notin P\ $and $z\notin P,\ $then
$xy,xz,yz\in ann(M).$
\end{theorem}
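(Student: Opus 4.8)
The plan is to reduce everything to the explicit description of nonunits in, and of membership in, $A\ltimes M$. Recall that $(x,m)$ is a unit of $A\ltimes M$ if and only if $x\in u(A)$, and that $(a,n)\in P\ltimes M$ if and only if $a\in P$; moreover $PM\subseteq M$ always holds, so $P\ltimes M$ is genuinely an ideal. The computation I would carry out at the outset is the triple product
\[
(x,m_1)(y,m_2)(z,m_3)=\bigl(xyz,\;xym_3+xzm_2+yzm_1\bigr),
\]
which lies in $P\ltimes M$ exactly when $xyz\in P$ and is nonzero exactly when $xyz\neq 0$ or $xym_3+xzm_2+yzm_1\neq 0$. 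With this formula in hand, both implications become bookkeeping on the two coordinates, since $(x,m_1)(y,m_2)\in P\ltimes M$ means $xy\in P$ and $(z,m_3)\in P\ltimes M$ means $z\in P$.

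For $(i)\Rightarrow(ii)$, I would first recover that $P$ is weakly $1$-absorbing prime in $A$: given nonunits $x,y,z\in A$ with $0\neq xyz\in P$, feed the triple $(x,0),(y,0),(z,0)$ into (i); its product is $(xyz,0)\neq 0$ and lies in $P\ltimes M$, so (i) forces $xy\in P$ or $z\in P$. For the annihilator condition, suppose $xyz=0$ with $x,y,z$ nonunits, $xy\notin P$ and $z\notin P$. The idea is to place a single element $m\in M$ in exactly one slot so that the second coordinate of the product isolates one of $yz$, $xz$, $xy$: the triples $(x,m),(y,0),(z,0)$, then $(x,0),(y,m),(z,0)$, then $(x,0),(y,0),(z,m)$ produce products $(0,yzm)$, $(0,xzm)$, $(0,xym)$ respectively, the first coordinate being $xyz=0\in P$. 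If any of $yzm$, $xzm$, $xym$ were nonzero, the corresponding product would be a nonzero element of $P\ltimes M$ built from nonunits, forcing $xy\in P$ or $z\in P$ by (i), against our hypotheses; hence $yz,xz,xy\in ann(M)$.

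For $(ii)\Rightarrow(i)$, take nonunits $(x,m_1),(y,m_2),(z,m_3)$ with nonzero product in $P\ltimes M$, so that $x,y,z$ are nonunits and $xyz\in P$, and assume for contradiction that $xy\notin P$ and $z\notin P$. If $xyz\neq 0$, then $0\neq xyz\in P$ and weak $1$-absorbing primeness of $P$ already yields $xy\in P$ or $z\in P$, a contradiction. If instead $xyz=0$, then by (ii) we have $xy,xz,yz\in ann(M)$, so $xym_3=xzm_2=yzm_1=0$; thus the second coordinate of the product vanishes as well, making the whole product $(0,0)$ and contradicting its nonzeroness. Either way $xy\in P$ or $z\in P$, so $P\ltimes M$ is weakly $1$-absorbing prime.

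I expect the forward direction's annihilator extraction to be the only real obstacle: the art is to choose the module entries so that the second coordinate singles out exactly one of $xy$, $xz$, $yz$ while the first coordinate stays zero, which is precisely what lets a single application of (i) kill each product. Once the product formula and the unit/ideal descriptions are fixed, the nonzero bookkeeping and the reverse implication are routine case checks.
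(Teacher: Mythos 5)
Your proposal is correct and follows essentially the same route as the paper: both establish that $(x,m)$ is a unit of $A\ltimes M$ iff $x$ is a unit of $A$, extract the weakly 1-absorbing prime property of $P$ via triples $(x,0),(y,0),(z,0)$, obtain the annihilator conditions by inserting a single $m\in M$ into one slot of the triple product, and prove the converse by splitting into the cases $xyz\neq 0$ and $xyz=0$. The only cosmetic difference is that you write out all three placements of $m$ explicitly, where the paper handles one and says \q{similarly}.
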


\begin{proof}
Firstly, we will show that
$(x,m)\ $is nonunit in $A\ltimes M$ if and only if $x$ is nonunit in $A.\ $ Let $(x,m)$ be a unit of $A\ltimes M.\ $. Then there exists
$(y,m^{\prime})\in A\ltimes M$ such that $(x,m)(y,m^{\prime})=(1,0).\ $This
implies that $xy=1\ $and $xm^{\prime}+ym=0\ $, and so $x$ is a unit of $A.\ $For
the converse, assume that $x$ is a unit of $A.\ $Now, we will show that
$(x,m)$ is unit for all $m\in M.\ $First, choose $y\in A\ $such that
$xy=1.\ $Now, put $m^{\prime}=-y^{2}m.\ $Then note that $(x,m)(y,m^{\prime
})=(xy,xm^{\prime}+ym)=(1,0)$ and so $(x,m)\ $is unit.

$(i)\Rightarrow(ii):\ $Suppose that $P\ltimes M\ $is a weakly 1-absorbing
prime ideal of $A\ltimes M.\ $Let $0\neq xyz\in P\ $for some nonunits
$x,y,z\in A.\ $Then we have $(x,0)(y,0)(z,0)=(xyz,0)\in P\ltimes M\ $for some
nonunits $(x,0),(y,0),(z,0)\in A\ltimes M.\ $Since $P\ltimes M\ $is weakly
1-absorbing prime,\ we have either $(x,0)(y,0)=(xy,0)\in P\ltimes M$ or
$(z,0)\in P\ltimes M.\ $Then we obtain $xy\in P$ or $z\in P.\ $Thus $P\ $is a
weakly 1-absorbing prime ideal of $A.\ $Now, assume that $xyz=0\ $for some
nonunits $x,y,z\in A\ $with $xy\notin P\ $and $z\notin P.\ $Assume that
$xy\notin ann(M).\ $Then there exists $m\in M\ $such that $xym\neq0.\ $This
implies that $(x,0)(y,0)(z,m)=(xyz,xym)\neq(0,0).\ $As
$(x,0)(y,0)(z,m)=(xyz,abm)\in P\ltimes M\ $for some nonunits
$(x,0),(y,0),(z,m)\in P\ltimes M,\ $we conclude either $(x,0)(y,0)=(xy,0)\in
P\ltimes M\ $or $(z,m)\in P\ltimes M\ $and so we have $xy\in P\ $or $z\in
P,\ $a contradiction. Thus $xy\in ann(M).\ $Similarly, we have $xz,yz\in
ann(M).\ $

$(ii)\Rightarrow(i):\ $Let $(0,0)\neq(x,m_{1})(y,m_{2})(z,m_{3})=(xyz,xym_{3}%
+xzm_{2}+yzm_{1})\in P\ltimes M\ $for some nonunits $(x,m_{1}),(y,m_{2}%
),(z,m_{3})\in A\ltimes M.\ $Then we have $xyz\in P\ $for some nonunits
$x,y,z\in A.\ $If $xyz\neq0,\ $then we conclude either $xy\in P\ $or $z\in
P.\ $This implies that $(x,m_{1})(y,m_{2})=(xy,xm_{2}+ym_{1})\in P\ltimes M$
or $(z,m_{3})\in P\ltimes M.\ $Now, assume that $xyz=0.$\ If $xy\notin P\ $and
$z\notin P,\ $then by assumption, we have $xy,xz,yz\in ann(M)\ $and so
$xym_{3}+xzm_{2}+yzm_{1}=0.\ $Then we have $(x,m_{1})(y,m_{2})(z,m_{3}%
)=(0,0)\ $which is a contradiction. Thus we have either $xy\in P\ $or $z\in P$
and so either$\ (x,m_{1})(y,m_{2})\in P\ltimes M\ $or $(z,m_{3})\in P\ltimes
M.\ $
\end{proof}

\begin{theorem}
\label{tcar}Suppose that $A_{1},A_{2}\ $be two commutative rings that are not
fields and $A=A_{1}\times A_{2}.\ $Let $P\ $be a nonzero proper ideal of
$A.\ $The following assertions are equivalent.

(i)\ $P\ $is a weakly 1-absorbing prime ideal of $A.\ $

(ii)\ $P=P_{1}\times A_{2}\ $for some prime ideal $P_{1}\ $of $A_{1}\ $or
$P=A_{1}\times P_{2}\ $for some prime ideal $P_{2}\ $of $A_{2}.$

(iii)\ $P$ is a prime ideal of $A.$

(iv)\ $P\ $is a weakly prime ideal of $A.\ $

(v)\ $P\ $is a 1-absorbing prime ideal of $A.$
\end{theorem}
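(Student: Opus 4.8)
The plan is to prove the five equivalences by establishing a cycle of implications, with the decomposition in (ii) as the structural heart of the argument. The implications $(iii)\Rightarrow(iv)$ and $(iii)\Rightarrow(v)$ are immediate from the diagram following Example \ref{ex6} (prime implies both weakly prime and 1-absorbing prime), and $(iv)\Rightarrow(i)$ and $(v)\Rightarrow(i)$ follow from Examples \ref{exw} and \ref{ex1abs} respectively. So the real content is in showing $(i)\Rightarrow(ii)$ and $(ii)\Rightarrow(iii)$; once those are in hand, the whole chain $(i)\Rightarrow(ii)\Rightarrow(iii)\Rightarrow(iv),(v)\Rightarrow(i)$ closes up and forces all five to be equivalent.

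I will first dispose of $(ii)\Rightarrow(iii)$, which is routine: if $P=P_1\times A_2$ with $P_1$ prime in $A_1$, then $A/P\cong A_1/P_1$ is an integral domain, so $P$ is prime (and symmetrically for $P=A_1\times P_2$). The crux is $(i)\Rightarrow(ii)$. Here I would exploit that neither $A_1$ nor $A_2$ is a field, so each has a nonzero maximal ideal and hence each contains a nonzero nonunit. Writing $P=I_1\times I_2$ is not automatic, so the first step is to show $P$ must in fact split as a product of ideals and that one factor is the whole ring. The key device is to feed carefully chosen idempotent-flavored triples into the weakly 1-absorbing prime condition: elements like $(1,0)$ and $(0,1)$ are nonunits in $A=A_1\times A_2$ precisely because neither factor is a field, and products built from them are nonzero, which lets me sidestep the ``triple-zero'' escape hatch that weakness normally allows. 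Concretely, I would take a nonzero element of $P$, multiply by suitable nonunit idempotent-type elements so that the product lands in $P$ and is nonzero, and then apply the defining condition to conclude that one coordinate of $P$ must be all of $A_1$ or all of $A_2$.

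The main obstacle I anticipate is controlling the nonzero hypothesis $0\neq xyz$ throughout: because $P$ is only \emph{weakly} 1-absorbing prime, the condition gives no information when the product is zero, so every triple I test must be verified to have nonzero product. This is exactly where the hypotheses that $P\neq 0$ and that both $A_i$ are non-fields do the work—nonzeroness of $P$ guarantees a witnessing element to anchor the products, and the non-field condition guarantees an ample supply of nonunits (in particular nonzero nonunits in each factor) to form the test triples without accidentally hitting zero. Once I know, say, $P=P_1\times A_2$, I would run the weakly 1-absorbing prime condition one more time on elements of the form $(a,0)(b,0)(0,1)$ with $ab\in P_1$ to force $P_1$ to absorb primely, i.e. $ab\in P_1$ implies $a\in P_1$ or $b\in P_1$, yielding that $P_1$ is prime and completing $(i)\Rightarrow(ii)$.
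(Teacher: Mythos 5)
Your overall skeleton is sound and essentially matches the paper's: the cycle $(i)\Rightarrow(ii)\Rightarrow(iii)\Rightarrow(iv),(v)\Rightarrow(i)$ with the decomposition in (ii) carrying the real content, and $(ii)\Rightarrow(iii)$ via $A/P\cong A_{1}/P_{1}$. Two side remarks before the main point: the splitting $P=P_{1}\times P_{2}$ \emph{is} automatic for ideals of a product of unital rings, so no work is needed there; and $(1,0),(0,1)$ are nonunits in any product of two nonzero rings, so the non-field hypothesis plays no role in that observation. Your first step of $(i)\Rightarrow(ii)$ -- testing a nonzero element of $P$ against idempotent-type nonunits -- is exactly the paper's move: for $0\neq x\in P_{1}$ the triple $(1,0),(1,0),(x,1)$ has nonzero product $(x,0)\in P$, forcing $(1,0)\in P$ or $(x,1)\in P$, i.e.\ $P_{1}=A_{1}$ or $P_{2}=A_{2}$.

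The genuine gap is your final step, where primality of the ideal factor is supposed to come out. With $P=P_{1}\times A_{2}$ and $ab\in P_{1}$, your proposed triple gives $(a,0)(b,0)(0,1)=(ab,0)(0,1)=(0,0)$: the product is identically zero, so the weakly 1-absorbing prime hypothesis yields nothing -- you fall into exactly the triple-zero escape hatch you said you would guard against. Worse, even if the product were nonzero, both branches of the definition are vacuous here, since $(ab,0)\in P$ and $(0,1)\in P$ hold automatically when $P=P_{1}\times A_{2}$; no information about $a$ or $b$ individually could ever follow. The repair -- and this is where the hypothesis that $A_{1},A_{2}$ are not fields genuinely enters -- is to pad with units in the coordinate being probed and plant a nonzero nonunit from the \emph{other} factor: choose a nonzero nonunit $s\in A_{2}$, and for nonunits $a,b\in A_{1}$ with $ab\in P_{1}$ test the ordered triple $(1,s),(a,1),(b,1)$. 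All three are nonunits, the product $(ab,s)$ is nonzero and lies in $P$, and the two branches read $(1,s)(a,1)=(a,s)\in P$, i.e.\ $a\in P_{1}$, or $(b,1)\in P$, i.e.\ $b\in P_{1}$. (The ordering matters: if $(1,s)$ is placed last, the branch \q{$xy\in P$} becomes $(ab,1)\in P$, which is trivially true.) This is precisely the paper's argument, stated there for the symmetric case $P=A_{1}\times P_{2}$ via the triple $(t,1),(1,y),(1,z)$ with $t\in A_{1}$ a nonzero nonunit.
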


\begin{proof}
$(i)\Rightarrow(ii):\ $Let $P\ $be a nonzero proper ideal of $A.\ $Then we can
write $P=P_{1}\times P_{2}\ $for some ideals $P_{1}\ $of $A_{1}\ $and
$P_{2}\ $of $A_{2}.$\ Since $P$ is nonzero, $P_{1}\neq0$ or $P_{2}\neq
0.\ $Without loss of generality, we may assume that $P_{1}\neq0.\ $Then there
exists $0\neq x\in P_{1}.\ $Since $P\ $is a weakly 1-absorbing prime ideal and
$(0,0)\neq(1,0)(1,0)(x,1)\in P,\ $we conclude either $(1,0)\in P\ $or
$(x,1)\in P.\ $Then we have either $P_{1}=A_{1}\ $or $P_{2}=A_{2}.\ $Assume
that $P_{1}=A_{1}.\ $Now we will show that $P_{2}\ $is a prime ideal of
$A_{2}.\ $Let $yz\in P_{2}\ $for some $y,z\in A_{2}.\ $If $y$ or $z$ is a
unit, then we have either $y\in P_{2}\ $or $z\in P_{2}.\ $So assume that
$y,z\ $are nonunits in $A_{2}.\ $Since $A_{1}\ $is not a field, there exists a
nonzero nonunit $t\in A_{1}.\ $This implies that $(0,0)\neq
(t,1)(1,y)(1,z)=(t,yz)\in P.\ $As $P\ $is a weakly 1-absorbing prime ideal of
$A,\ $we conclude either $(t,1)(1,y)=(t,y)\in P\ $or $(1,z)\in P.\ $Thus we
get $y\in P_{2}\ $or $z\in P_{2}\ $and so $P_{2}\ $is a prime ideal of
$A_{2}.$\ In other case, one can similarly show that $P=P_{1}\times A_{2}$ and
$P_{1}\ $is a prime ideal of $A_{1}.$

$(ii)\Rightarrow(iii):\ $It is clear.

$(iii)\Leftrightarrow(iv):\ $Follows from \cite[Theorem 7]{AnSmi}.

$(iii)\Rightarrow(v):\ $Directly from definition \cite[Definition 2.1]{YasNik}.

$(v)\Rightarrow(i):\ $Follows from Example \ref{ex1abs}.
\end{proof}

\begin{theorem}
\label{tcarr}Let $A_{1},A_{2},\ldots,A_{n}\ $be commutative rings and
$A=A_{1}\times A_{2}\times\cdots\times A_{n},\ $where $n\geq2.\ $The following
statments are equivalent.

(i)\ Every proper ideal of $A\ $is a weakly 1-absorbing prime ideal.

(ii)\ $n=2\ $and $A_{1},A_{2}\ $are fields.
\end{theorem}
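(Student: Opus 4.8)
The plan is to prove the equivalence by establishing the two implications separately, with $(ii)\Rightarrow(i)$ being short and $(i)\Rightarrow(ii)$ handled by contraposition through two explicit counterexamples. For $(ii)\Rightarrow(i)$, when $A=F_{1}\times F_{2}$ with $F_{1},F_{2}$ fields the only ideals are $0$, $F_{1}\times 0$, $0\times F_{2}$ and $A$, so the proper ones are $0$, $F_{1}\times 0$ and $0\times F_{2}$. The zero ideal is weakly $1$-absorbing prime vacuously (the hypothesis $0\neq xyz\in 0$ is never met), while $A/(F_{1}\times 0)\cong F_{2}$ and $A/(0\times F_{2})\cong F_{1}$ are fields, so $F_{1}\times 0$ and $0\times F_{2}$ are prime and hence weakly $1$-absorbing prime by Example \ref{ex1abs} (or Example \ref{exw}). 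Thus every proper ideal is weakly $1$-absorbing prime.

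For $(i)\Rightarrow(ii)$ I would assume $(ii)$ fails and produce a proper ideal that is not weakly $1$-absorbing prime, in two cases. First suppose $n\geq 3$. I would set $P=0\times 0\times A_{3}\times\cdots\times A_{n}$, which is proper since $1_{A}=(1,1,\ldots,1)\notin P$. Taking $x=y=(1,0,1,\ldots,1)$ and $z=(0,1,1,\ldots,1)$, each of $x,y,z$ is a nonunit because it has a zero among its first two coordinates; then $xy=(1,0,1,\ldots,1)$, so $xyz=(0,0,1,\ldots,1)\in P$ is nonzero (its third coordinate is $1$), while $xy\notin P$ (first coordinate $1$) and $z\notin P$ (second coordinate $1$). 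Hence $P$ is not weakly $1$-absorbing prime, forcing $n=2$.

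Next suppose $n=2$ but, after relabelling, $A_{1}$ is not a field, and choose a nonzero nonunit $a\in A_{1}$. I would set $P=aA_{1}\times 0$, a proper ideal of $A=A_{1}\times A_{2}$ since $(1,0)\notin P$ (as $1\notin aA_{1}$, $a$ being a nonunit). With $x=y=(1,0)$ and $z=(a,1)$, the elements $x,y$ are nonunits (second coordinate $0$) and $z$ is a nonunit (first coordinate $a$); then $xy=(1,0)$ and $xyz=(a,0)\in P$ with $xyz\neq 0$, while $xy\notin P$ (first coordinate $1\notin aA_{1}$) and $z\notin P$ (second coordinate $1\neq 0$). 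So $P$ is not weakly $1$-absorbing prime, and the symmetric ideal $P=0\times a'A_{2}$ settles the case where $A_{2}$ is not a field. Combining the two cases yields that $(i)$ forces $n=2$ with $A_{1},A_{2}$ both fields, which is $(ii)$.

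The main difficulty here is discovery rather than verification: once the ideals and triples are written down the checks are routine. The $n\geq 3$ obstruction must genuinely use three coordinates — two coordinates cannot produce such a triple, which is precisely why $n=2$ survives — and the $n=2$ obstruction must exploit a nonzero nonunit supplied by a non-field factor. I note that in the subcase $n=2$ with both factors non-fields one could instead invoke Theorem \ref{tcar}, by which a nonzero proper weakly $1$-absorbing prime ideal would have to be prime while $aA_{1}\times 0$ is not; however, the explicit triple $x=y=(1,0)$, $z=(a,1)$ is preferable because it also covers the mixed case in which one factor is a field, a situation Theorem \ref{tcar} does not address.
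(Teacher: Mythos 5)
Your proof is correct, and two of its three pieces coincide with the paper's: your $n\geq 3$ counterexample is the paper's ideal $\{0\}\times\{0\}\times A_{3}\times\cdots\times A_{n}$ with essentially the same triple (the paper uses $z=(0,1,x,1,\ldots,1)$ for a nonzero $x\in A_{3}$; you simply take $x=1$), and your $(ii)\Rightarrow(i)$ enumerates the three proper ideals of $F_{1}\times F_{2}$ exactly as the paper does, except that you justify the two nonzero ones by observing they are prime rather than by citing Theorem \ref{tcar}. The genuine divergence is the case $n=2$ with a non-field factor. The paper argues in two steps: it applies Theorem \ref{tcar} to $P^{\star}=\{0\}\times A_{2}$ to conclude that $A_{1}$ is a domain, and then uses $J=(a^{2})\times A_{2}$ with the triple $(a,1)(1,0)(a,1)=(a^{2},0)$ to force $a=ra^{2}$, hence $1=ra$, contradicting that $a$ is a nonunit; both steps lean on the domain property (also needed to guarantee $a^{2}\neq 0$). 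You instead exhibit the single ideal $aA_{1}\times 0$ with the triple $(1,0)(1,0)(a,1)=(a,0)$, which is nonzero outright because $a\neq 0$, so no domain step is required. This buys more than brevity: Theorem \ref{tcar} carries the standing hypothesis that \emph{both} factors are non-fields, which is not available at that point of the paper's argument, and its implication (i)$\Rightarrow$(iii) genuinely fails in the mixed case --- in $\mathbb{Z}_{4}\times\mathbb{Z}_{2}$ the ideal $\{0\}\times\mathbb{Z}_{2}$ is weakly 1-absorbing prime, yet $\mathbb{Z}_{4}$ is not a domain --- so the paper's reduction to a domain is unjustified when exactly one factor is a field. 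Your explicit triple treats that mixed case uniformly, as your closing remark correctly notes, so your route is not only different but actually closes a gap in the paper's own proof.
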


\begin{proof}
$(i)\Rightarrow(ii):\ $Suppose that $n\geq3.\ $Let $P=\{0\}\times\{0\}\times
A_{3}\times A_{4}\times\cdots\times A_{n}.\ $Then by (i),\ $P\ $is a weakly
1-absorbing prime ideal of $A.\ $Choose a nonzero element $x\in A_{3}.\ $Then
we have $(1,0,1,1,\ldots,1)(1,0,1,1,\ldots,1)(0,1,x,1,1,\ldots
,1)=(0,0,x,1,1,\ldots,1)\in P.\ $As $P\ $is a weakly 1-absorbing prime ideal,
we conclude either $(1,0,1,1,\ldots,1)(1,0,1,1,\ldots,1)=(1,0,1,1,\ldots,1)\in
P$ or $(0,1,x,1,1,\ldots,1)\in P\ $which both of them are contradictions.
Therefore $n=2.\ $Now, we will show that $A_{1},A_{2}\ $are fields. Now, put
$P^{\star}=\{0\}\times A_{2}.\ $Since $P^{\star}$ is a weakly 1-absorbing
prime ideal of $A,\ $by Theorem \ref{tcar},\ $\{0\}\ $is a prime ideal of
$A_{1},\ $that is, $A_{1}\ $is a domain. Likewise, $A_{2}\ $is a domain. Let
$0\neq a\in A_{1}.\ $If $a$ is nonunit, then $J=(a^{2})\times A_{2}\ $is a
proper ideal of $A.\ $Then by assumption, $J\ $is a weakly 1-absorbing prime
ideal of $A.\ $Since $(0,0)\neq(a,1)(1,0)(a,1)\in J,\ $we conclude either
$(a,1)(1,0)=(a,0)\in J\ $or $(a,1)\in J\ $and this yields that $a=ra^{2}\ $for
some $r\in A_{1}.\ $As $A_{1}\ $is a domain and $a\neq0,\ $we have
$1=ra$,\ namely, $a$ is a unit\ which is a contradiction. Therefore, $a$ is
unit and $A_{1}\ $is a field. Likewise, $A_{2}\ $is a field.

$(ii)\Rightarrow(i):\ $Suppose that $n=2\ $and $A_{1},A_{2}\ $are fields. Let
$P\ $be a proper ideal of $A=A_{1}\times A_{2}.\ $Then all the possibilities
for $P\ $are $\{0\}\times\{0\},\ \{0\}\times A_{2}\ $and $A_{1}\times
\{0\}.\ $If $P=\{0\}\times A_{2}$ or $P=A_{1}\times\{0\},\ $then $P\ $is weakly 1-absorbing prime ideal by Theorem \ref{tcar}. If $P=\{0\}\times\{0\},\ $then
$P\ $is trivially weakly 1-absorbing prime ideal of $A.$
\end{proof}

\section{Rings in which every proper ideal is weakly 1-absorbing prime}

\begin{proposition}
\label{pc1}Let $A\ $be a ring in which every proper ideal is weakly
1-absorbing prime. Then one of the following statements hold.

(i)\ $Jac(A)^{2}=(0).$

(ii)\ For each $x,y\in Jac(A)\ $with $xy\neq0,\ $then $Jac(A)=(0:xy).\ $In
particular, $Jac(A)=(0:Jac(A)^{2}).\ \ $
\end{proposition}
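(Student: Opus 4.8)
The plan is to argue that if $Jac(A)^2 \neq (0)$, then option (ii) must hold. So I would assume there exist $x,y \in Jac(A)$ with $xy \neq 0$ and show $Jac(A) = (0:xy)$. The key tool is that \emph{every} proper ideal of $A$ is weakly $1$-absorbing prime, so in particular I am free to choose a convenient proper ideal $P$ and feed it a triple product. The natural candidate is a principal ideal generated by a product of elements of $Jac(A)$, since elements of the Jacobson radical are automatically nonunits, which is exactly the hypothesis the weakly $1$-absorbing prime condition requires on its three factors.

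\textbf{Establishing the inclusion $Jac(A) \subseteq (0:xy)$.}
Fix $x,y \in Jac(A)$ with $xy \neq 0$, and take any $z \in Jac(A)$; I want to show $xyz = 0$. Consider the proper ideal $P = (xyz)$ if $xyz \neq 0$ (or more robustly, an ideal like $(x^2y^2)$ or a principal ideal chosen to block the ``easy'' escape routes). The idea is to exhibit a triple product lying in $P$ whose factors are the nonunits $x,y,z$ (all in $Jac(A)$, hence nonunits), and then invoke the weakly $1$-absorbing prime property of $P$ to force either $xy \in P$ or $z \in P$. The point of choosing $P$ cleverly is that both conclusions should be impossible unless $xyz = 0$: if $P$ is chosen so that $xy \notin P$ and $z \notin P$ whenever $xyz \neq 0$, then the only escape is that the hypothesis $0 \neq xyz$ fails, giving $xyz = 0$. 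Because $x,y$ are fixed and $z$ ranges over all of $Jac(A)$, this yields $Jac(A) \subseteq (0:xy)$.

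\textbf{Establishing the reverse inclusion $(0:xy) \subseteq Jac(A)$.}
For this direction I would take $w \in (0:xy)$, so $xyw = 0$, and argue that $w \in Jac(A)$. The natural approach is the contrapositive: if $w \notin Jac(A)$, then $w$ avoids some maximal ideal, and I would try to adjust $w$ by a unit (for instance replace $w$ by $w$ or $1+rw$ type manipulations, or use that $w$ together with the radical generates a unit) to derive a contradiction with $xy \neq 0$ using the annihilator relation $xyw = 0$. Alternatively, since $xy \in Jac(A)^2$ and $Jac(A)$ is the intersection of maximal ideals, I would relate $(0:xy)$ to membership in $Jac(A)$ through the weakly $1$-absorbing prime hypothesis applied to a suitable ideal. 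The final clause $Jac(A) = (0:Jac(A)^2)$ then follows by intersecting the equalities $Jac(A) = (0:xy)$ over all generating pairs $x,y$ of $Jac(A)^2$, since $(0:Jac(A)^2) = \bigcap (0:xy)$.

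\textbf{Main obstacle.}
The hardest part will be the reverse inclusion $(0:xy) \subseteq Jac(A)$ and, relatedly, choosing the single ideal $P$ that simultaneously rules out both $xy \in P$ and $z \in P$ across all relevant $z$. The weakly $1$-absorbing prime property only gives information when the product is \emph{nonzero}, so the whole argument hinges on manufacturing products that are forced to be either nonzero (to apply the property) or zero (to land in the annihilator); balancing these two regimes, and handling the boundary case where some product vanishes, is where the real care is needed.
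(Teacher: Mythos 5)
Your high-level plan does match the paper's: assume $Jac(A)^{2}\neq(0)$, fix $x,y\in Jac(A)$ with $xy\neq0$, prove both inclusions of $Jac(A)=(0:xy)$ by feeding triples of nonunits from $Jac(A)$ into the weakly $1$-absorbing prime property of a principal ideal generated by the product itself, and obtain the last clause from $(0:Jac(A)^{2})=\bigcap_{x,y\in Jac(A)}(0:xy)$. But the forward inclusion has a genuine gap at its decisive step. You say the point is to choose $P$ ``so that $xy\notin P$ and $z\notin P$ whenever $xyz\neq0$.'' With $P=Axyz$ (the paper's choice, and yours) this cannot be arranged: the hypothesis \emph{will} deliver $xy\in P$ or $z\in P$, and the proof must show that each alternative collapses. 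The missing mechanism is the Jacobson-radical unit trick: $xy\in P$ means $xy=axyz$ for some $a\in A$, i.e. $xy(1-az)=0$; since $z\in Jac(A)$, the element $1-az$ is a unit, so $xy=0$, contradicting $xyz\neq0$. Similarly $z\in P$ gives $z(1-bxy)=0$ with $1-bxy$ a unit, so $z=0$, again a contradiction. This argument is the entire content of the inclusion, and your sketch never invokes it.

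The reverse inclusion $(0:xy)\subseteq Jac(A)$, which you yourself flag as the main obstacle, is left as a list of possibilities (``adjust $w$ by a unit,'' ``relate $(0:xy)$ to membership\dots through the hypothesis applied to a suitable ideal'') without committing to an argument, so nothing is proved there. The paper's argument is concrete and again hinges on the unit trick: if $c\in(0:xy)$ but $c\notin Jac(A)$, then by the standard characterization of the Jacobson radical there exists $d\in A$ such that $1-cd$ is a \emph{nonunit}; since $cxy=0$ one has $(1-cd)xy=xy\neq0$, so $(1-cd)\cdot x\cdot y$ is a nonzero product of three nonunits lying in $Q=A(1-cd)xy$. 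The hypothesis then gives $(1-cd)x\in Q$ or $y\in Q$, i.e. $(1-cd)x=(1-cd)xyr$ or $y=(1-cd)xys$, and since $y\in Jac(A)$ and $x\in Jac(A)$ the elements $1-yr$ and $1-x(s-cds)$ are units, forcing $(1-cd)x=0$ or $y=0$, both of which contradict $(1-cd)xy\neq0$. Note that your phrasing ``$w$ avoids some maximal ideal'' is the negation of $w\in Jac(A)$, but what the proof actually runs on is the equivalent formulation that $1-wd$ is a nonunit for some $d$; identifying and exploiting that pivot is exactly what is absent from your proposal.
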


\begin{proof}
Suppose that $Jac(A)^{2}\neq(0).\ $Choose $x,y\in Jac(A)\ $such that
$xy\neq0.\ $Now, we will show that $Jac(A)\subseteq(0:xy).\ $Suppose to the
contrary. Then there exists $z\in Jac(A)-(0:xy),\ $which implies that
$xyz\neq0.\ $Since every ideal is weakly 1-absorbing prime, so is
$P=Axyz$.\ As $0\neq xyz\in P,\ $we conclude either $xy\in P$ or $z\in
P.\ $Thus we have $xy=axyz$ or $z=bxyz$ for some $a,b\in A.\ $This gives
$xy(1-az)=0$ or $z(1-bxy)=0.\ $Since $x,y,z\in Jac(R),\ $we have $1-bxy$ and
$1-az$ are unit, and so we have either $xy=0$ or $z=0\ $which is
contradiction. Thus we have $Jac(A)\subseteq(0:xy).\ $Now, take
$c\in(0:xy).\ $Let $c\notin Jac(A)$. Then there exists $d\in A\ $such that
$1-cd$ is nonunit. Then note that $(1-cd)xy\neq0$ since $cxy=0\ $and
$xy\neq0.\ $Since $Q=A(1-cd)xy$ is weakly 1-absorbing prime, we get either
$(1-cd)x\in Q\ $or $y\in Q.\ $Then there exist $r,s\in A\ $such that
$(1-cd)x=(1-cd)xyr$ or $y=(1-cd)xys$. Then we obtain $(1-cd)x(1-yr)=0$ or
$y(1-(xs-cdxs))=0.\ $Since $1-yr$ and $1-(xs-cdxs)\ $are units, we get
$(1-cd)x=0\ $or $y=0,\ $which again are contradictions. Thus we have
$(0:xy)\subseteq Jac(A),\ $that is, $Jac(A)=(0:xy).\ $Now,we will show that
$Jac(A)=(0:Jac(A)^{2}).\ $First note that $Jac(A)^{2}=\sum\limits_{x,y\in
Jac(A)}Axy.\ $Then we have $(0:Jac(A)^{2})=\bigcap\limits_{x,y\in
Jac(A)}(0:xy).$ Let $x,y\in Jac(A).\ $If $xy=0,\ $then $(0:xy)=A.\ $If
$xy\neq0,\ $then we have $(0:xy)=Jac(A).\ $So we conclude that $(0:Jac(A)^{2}%
)=\bigcap\limits_{x,y\in Jac(A)}(0:xy)=Jac(A).$
\end{proof}

\begin{theorem}
\label{tql}Let $(A,\mathfrak{m})$ be a quasi-local ring. The following
statements are equivalent.

(i) Every proper ideal is a weakly 1-absorbing prime ideal.

(iii)\ $\mathfrak{m}^{3}=(0).$
\end{theorem}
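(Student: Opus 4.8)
The plan is to prove the two implications separately, leaning on Proposition \ref{pc1} for the forward direction and on a vacuous-truth argument for the converse. Throughout I will use the standard fact that in a quasi-local ring $(A,\mathfrak{m})$ the set of nonunits is exactly $\mathfrak{m}$, together with the identity $Jac(A)=\mathfrak{m}$.

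For the implication $(iii)\Rightarrow(i)$, I would assume $\mathfrak{m}^{3}=(0)$ and let $P$ be an arbitrary proper ideal of $A$. To verify the defining condition, take nonunits $x,y,z\in A$ with $0\neq xyz\in P$. Since the nonunits are precisely the elements of $\mathfrak{m}$, we have $x,y,z\in\mathfrak{m}$, hence $xyz\in\mathfrak{m}^{3}=(0)$, contradicting $xyz\neq0$. Thus no such triple $(x,y,z)$ can exist, and $P$ satisfies the weakly 1-absorbing prime implication vacuously. As $P$ was arbitrary, every proper ideal is weakly 1-absorbing prime, which is (i).

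For the implication $(i)\Rightarrow(iii)$, I would assume that every proper ideal of $A$ is weakly 1-absorbing prime. Because $A$ is quasi-local, $Jac(A)=\mathfrak{m}$, so Proposition \ref{pc1} applies and delivers one of its two alternatives. If $\mathfrak{m}^{2}=(0)$, then at once $\mathfrak{m}^{3}\subseteq\mathfrak{m}^{2}=(0)$. Otherwise the proposition gives $\mathfrak{m}=(0:\mathfrak{m}^{2})$; since this forces $\mathfrak{m}\subseteq(0:\mathfrak{m}^{2})$, we obtain $\mathfrak{m}\cdot\mathfrak{m}^{2}=(0)$, that is, $\mathfrak{m}^{3}=(0)$. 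In either case $\mathfrak{m}^{3}=(0)$, proving (iii).

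There is essentially no hard step: the entire content of the forward direction is already packaged inside Proposition \ref{pc1}, while the converse is purely a vacuity argument hinging on the identification of nonunits with $\mathfrak{m}$. The only point deserving a moment's care is the bookkeeping in $(i)\Rightarrow(iii)$, namely recognizing that the second alternative $\mathfrak{m}=(0:\mathfrak{m}^{2})$ of Proposition \ref{pc1} already encodes $\mathfrak{m}^{3}=(0)$ because the containment $\mathfrak{m}\subseteq(0:\mathfrak{m}^{2})$ says exactly that $\mathfrak{m}$ annihilates $\mathfrak{m}^{2}$. Collapsing both alternatives into the single conclusion $\mathfrak{m}^{3}=(0)$ completes the argument.
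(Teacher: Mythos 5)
Your proof is correct and takes essentially the same route as the paper: the forward direction invokes Proposition \ref{pc1} (after identifying $Jac(A)=\mathfrak{m}$) and collapses both of its alternatives into $\mathfrak{m}^{3}=(0)$, while the converse is the same contradiction/vacuity argument using that the nonunits of a quasi-local ring are exactly the elements of $\mathfrak{m}$, so $xyz\in\mathfrak{m}^{3}=(0)$ is incompatible with $xyz\neq 0$. The only difference is that you spell out the case analysis behind the paper's terse \q{the rest follows from Proposition \ref{pc1}}.
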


\begin{proof}
$(i)\Rightarrow(ii):\ $First note that $Jac(A)=\mathfrak{m}$.\ The rest
follows from Proposition \ref{pc1}.

$(ii)\Rightarrow(i):\ $Suppose that $\mathfrak{m}^{3}=(0).$ Let $P\ $be a
nonzero proper ideal of $A.\ $Assume that $P\ $is not weakly 1-absorbing
prime, then there exist nonunits $x,y,z\in A\ $such that $0\neq xyz\in P\ $but
$xy\notin P\ $and $z\notin P.\ $As $x,y,z$ are nonunits and $\mathfrak{m}%
^{3}=(0),\ $we get $xyz=0$ which is a contradiction. Therefore, $P\ $is weakly
1-absorbing prime.
\end{proof}

\begin{corollary}
Suppose that $(A,\mathfrak{m})$ is a quasi-local ring with $\mathfrak{m}%
^{2}=(0).\ $Then, every proper ideal is a 1-absorbing prime ideal.
\end{corollary}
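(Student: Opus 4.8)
The plan is to exploit the single feature of a quasi-local ring $(A,\mathfrak{m})$ that makes this immediate: the set of nonunits of $A$ is precisely $\mathfrak{m}$. Let $P$ be any proper ideal of $A$, and take nonunits $x,y,z\in A$ with $xyz\in P$; I must produce either $xy\in P$ or $z\in P$. Since $A$ is quasi-local, $x,y,z\in\mathfrak{m}$, and in particular $xy\in\mathfrak{m}^{2}$. But $\mathfrak{m}^{2}=(0)$ by hypothesis, so $xy=0$, and because $0$ lies in every ideal we get $xy=0\in P$. Thus the first alternative of the definition of a $1$-absorbing prime ideal is satisfied automatically, and $P$ is $1$-absorbing prime. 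Since $P$ was an arbitrary proper ideal, this finishes the argument.

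An alternative, more structural route is to bootstrap from the two results already established. Observe first that $\mathfrak{m}^{2}=(0)$ forces $\mathfrak{m}^{3}=(0)$, so by Theorem \ref{tql} every proper ideal of $A$ is weakly $1$-absorbing prime. It then suffices to invoke Theorem \ref{tfac}(iii), which upgrades a weakly $1$-absorbing prime ideal to a $1$-absorbing prime ideal provided the zero ideal is $1$-absorbing prime. So the only remaining point is to check that $(0)$ is $1$-absorbing prime, and this is verified by the very same computation: if $xyz=0$ for nonunits $x,y,z$, then $x,y\in\mathfrak{m}$ gives $xy\in\mathfrak{m}^{2}=(0)$, whence $xy=0\in(0)$.

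Either way, there is essentially no obstacle to overcome: the substantive content is entirely encoded in the hypothesis $\mathfrak{m}^{2}=(0)$, which is exactly strong enough to annihilate any product of two nonunits. I would present the direct argument as the main proof for brevity, since it avoids appealing to two separate earlier theorems, and mention the Theorem \ref{tql}/Theorem \ref{tfac}(iii) chain only as a remark tying the corollary back to the preceding development.
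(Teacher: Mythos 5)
Your direct argument is exactly the paper's proof: in a quasi-local ring the nonunits are precisely the elements of $\mathfrak{m}$, so $x,y\in\mathfrak{m}$ gives $xy\in\mathfrak{m}^{2}=(0)\subseteq P$, and the first alternative of the definition holds automatically. The alternative route via Theorem \ref{tql} and Theorem \ref{tfac} (iii) is also valid, but since you present the direct computation as your main proof, your approach coincides with the paper's.
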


\begin{proof}
Let $(A,\mathfrak{m})$ be a quasi-local ring with $\mathfrak{m}^{2}=(0).\ $Let
$P\ $be a proper ideal of $A\ $and $xyz\in P$ for some nonunits $x,y,z\in
A.\ $Since $x$ and $y$ are nonunit and $\mathfrak{m}^{2}=(0),\ $we have
$xy=0\in P.\ $Therefore, $P\ $is a 1-absorbing prime ideal of $A.\ $
\end{proof}

\begin{theorem}
\label{tmax}Suppose that every proper ideal of $A$ is weakly 1-absorbing
prime. Then $\left\vert Max(A)\right\vert \leq2.$
\end{theorem}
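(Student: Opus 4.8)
The plan is to suppose, for contradiction, that $A$ has at least three distinct maximal ideals, and then manufacture a proper ideal that fails to be weakly 1-absorbing prime by exhibiting an explicit 1-triple-zero-type witness. The natural strategy mirrors the arguments in Theorem~\ref{tcarr} and Proposition~\ref{pc1}: since every proper ideal — in particular every principal ideal — is weakly 1-absorbing prime, I get very rigid divisibility constraints, and three independent maximal ideals give me enough room to violate them.

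Let me think about where three maximal ideals come from and what they buy me. Suppose $\mathfrak{m}_1,\mathfrak{m}_2,\mathfrak{m}_3$ are distinct maximal ideals. By the Prime Avoidance / Chinese Remainder flavor of reasoning, I can pick elements that lie in some of these ideals but not others. The key is to produce nonunits $x,y,z$ with $0\neq xyz$ sitting inside a principal ideal $P=Axyz$ while both $xy\notin P$ and $z\notin P$. First I would use maximality to select, via the Chinese Remainder Theorem applied to $A/(\mathfrak{m}_1\cap\mathfrak{m}_2\cap\mathfrak{m}_3)\cong A/\mathfrak{m}_1\times A/\mathfrak{m}_2\times A/\mathfrak{m}_3$, elements that behave like the coordinate idempotent witnesses used in the proof of Theorem~\ref{tcarr}. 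Concretely, I would choose $x$ a unit modulo $\mathfrak{m}_1$ but lying in $\mathfrak{m}_2\cap\mathfrak{m}_3$, and arrange $y,z$ analogously so that each of $x,y,z$ is a nonunit (lies in some maximal ideal) yet no two of them lie in a common maximal ideal in the wrong pattern; the goal is that $xyz$ is a genuine nonzero element whose principal ideal cannot absorb the product in the required way.

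The cleaner route is probably to reduce to the three-field quotient. Consider the surjection $A\twoheadrightarrow A/\mathfrak{m}_1\times A/\mathfrak{m}_2\times A/\mathfrak{m}_3$, a product of three fields. By Theorem~\ref{thom}(ii), a surjective image of a weakly 1-absorbing prime ideal (containing the kernel) is again weakly 1-absorbing prime; and if every proper ideal of $A$ is weakly 1-absorbing prime, I would like to conclude the same for this three-field product and then invoke Theorem~\ref{tcarr}, which says a finite product of rings has all proper ideals weakly 1-absorbing prime only when $n=2$ and both factors are fields. The subtlety is that ``every proper ideal is weakly 1-absorbing prime'' need not literally descend to quotients unless I check that the quotient's proper ideals all pull back to proper ideals whose images are forced into the right shape; but Theorem~\ref{tfac}(i) gives exactly the descent I need — each proper ideal of $A/\mathfrak{m}_1\mathfrak{m}_2\mathfrak{m}_3$ has the form $P/Q$ with $P\supseteq Q=\mathfrak{m}_1\cap\mathfrak{m}_2\cap\mathfrak{m}_3$ proper in $A$, and $P$ being weakly 1-absorbing prime in $A$ forces $P/Q$ to be so in the quotient. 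Thus every proper ideal of the three-field product $A/\mathfrak{m}_1\times A/\mathfrak{m}_2\times A/\mathfrak{m}_3$ is weakly 1-absorbing prime, contradicting Theorem~\ref{tcarr}, and therefore $\lvert Max(A)\rvert\leq 2$.

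The main obstacle I anticipate is verifying the descent carefully: I must confirm that the quotient $A/(\mathfrak{m}_1\cap\mathfrak{m}_2\cap\mathfrak{m}_3)$ really is isomorphic to the product of three fields (comaximality of distinct maximal ideals, so CRT applies), and that Theorem~\ref{tfac}(i) applies verbatim to produce ``all proper ideals weakly 1-absorbing prime'' in the quotient — since Theorem~\ref{tfac}(i) handles a single ideal at a time, I simply apply it to each proper ideal of the quotient individually, each of which lifts to a proper ideal of $A$ containing the kernel. Once that bookkeeping is done, the contradiction with Theorem~\ref{tcarr} is immediate, so the heart of the argument is really just assembling CRT plus the factor-ring stability already proved, rather than any delicate new computation.
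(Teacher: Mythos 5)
Your proof is correct, but it takes a genuinely different route from the paper's. The paper argues by cases on the product $\mathfrak{m}_1\mathfrak{m}_2\mathfrak{m}_3$: if it is nonzero, it applies the characterization in Theorem \ref{tmm} (part (vi)) to the weakly 1-absorbing prime ideal $P=\mathfrak{m}_1\mathfrak{m}_2\mathfrak{m}_3$ itself, with $I=\mathfrak{m}_1$, $J=\mathfrak{m}_2$, $K=\mathfrak{m}_3$, obtaining $\mathfrak{m}_1\mathfrak{m}_2\subseteq\mathfrak{m}_3$ or $\mathfrak{m}_3\subseteq\mathfrak{m}_1$, and hence by primeness and maximality that two of the three maximal ideals coincide; if it is zero, then CRT makes $A$ itself a product of three fields and Theorem \ref{tcarr} gives the contradiction. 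You instead collapse both cases into one by passing to the quotient $A/(\mathfrak{m}_1\cap\mathfrak{m}_2\cap\mathfrak{m}_3)\cong A/\mathfrak{m}_1\times A/\mathfrak{m}_2\times A/\mathfrak{m}_3$, transporting the hypothesis there via the correspondence theorem together with Theorem \ref{tfac}(i), and then invoking Theorem \ref{tcarr} with $n=3$. The descent step you flagged is indeed the only point needing care, and it goes through exactly as you say: Theorem \ref{tfac}(i) carries no side conditions (unlike \ref{tfac}(ii) or \ref{thom}(i)), because a nonunit of $A/Q$ always lifts to a nonunit of $A$, and a nonzero coset product $\overline{x}\,\overline{y}\,\overline{z}\neq\overline{0}$ forces $xyz\neq 0$ in $A$. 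What your route buys is a uniform, shorter argument that never asks whether $\mathfrak{m}_1\mathfrak{m}_2\mathfrak{m}_3$ vanishes and avoids Theorem \ref{tmm} entirely; what the paper's route buys is that its first case is a purely ideal-theoretic one-liner with no quotient or correspondence bookkeeping, and its second case applies Theorem \ref{tcarr} to $A$ itself rather than to a quotient. Both arguments ultimately lean on Theorem \ref{tcarr} for the product-of-three-fields contradiction.
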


\begin{proof}
Let $A\ $be a ring over which every proper ideal is weakly 1-absorbing prime.
Suppose that $\left\vert Max(A)\right\vert \geq3.\ $Choose maximal ideals
$\mathfrak{m}_{1},\mathfrak{m}_{2},\mathfrak{m}_{3}.\ $

\textbf{Case 1: }Assume that $\mathfrak{m}_{1}\mathfrak{m}_{2}\mathfrak{m}%
_{3}\neq(0).\ $Since $\mathfrak{m}_{1}\mathfrak{m}_{2}\mathfrak{m}%
_{3}\subseteq\mathfrak{m}_{1}\mathfrak{m}_{2}\mathfrak{m}_{3}\ $and
$\mathfrak{m}_{1}\mathfrak{m}_{2}\mathfrak{m}_{3}$ is weakly 1-absorbing prime
ideal, by Theorem \ref{tmm}, we conclude either $\mathfrak{m}_{1}%
\mathfrak{m}_{2}\subseteq\mathfrak{m}_{1}\mathfrak{m}_{2}\mathfrak{m}%
_{3}\subseteq\mathfrak{m}_{3}$ or $\mathfrak{m}_{3}\subseteq\mathfrak{m}%
_{1}\mathfrak{m}_{2}\mathfrak{m}_{3}\subseteq\mathfrak{m}_{1}\mathcal{.\ }%
$This implies that $\mathfrak{m}_{1}=\mathfrak{m}_{3}$ or $\mathfrak{m}%
_{2}=\mathfrak{m}_{3}\ $, a contradiction.

\textbf{Case 2:\ }Assume that $\mathfrak{m}_{1}\mathfrak{m}_{2}\mathfrak{m}%
_{3}=(0).\ $Then by Chinese Remainder Theorem, $A\ $is isomorphic to
$\left(  A/\mathfrak{m}_{1}\right) \times \left(  A/\mathfrak{m}_{2}\right)
\times \left(  A/\mathfrak{m}_{3}\right)  .\ $Take $A=F_{1}\times F_{2}\times
F_{3},\ $where $F_{1},F_{2},F_{3}\ $are fields. Then by Thoerem \ref{tcarr}%
,\ we have $F_{1}=(0)\ $or $F_{2}=(0)\ $or $F_{3}=(0)\ $which is contradiction.

Therefore, $\left\vert Max(A)\right\vert \leq2.$
\end{proof}

Now, we characterize rings whose all proper ideals are weakly 1-absorbing
prime ideal.

\begin{theorem}
\label{tring}Let $A\ $be a ring. The following statements are equivalent.

(i)\ Every proper ideal of $A\ $is a weakly 1-absorbing prime ideal.

(ii) Either $(A,\mathfrak{m})$ is a quasi-local ring with $\mathfrak{m}%
^{3}=(0)$ or $A=F_{1}\times F_{2},\ $where $F_{1},F_{2}\ $are fields.
\end{theorem}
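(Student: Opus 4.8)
The statement is an equivalence, and I will treat the two implications separately; the implication $(ii)\Rightarrow(i)$ requires essentially no new work. Indeed, if $(A,\mathfrak{m})$ is quasi-local with $\mathfrak{m}^{3}=(0)$, then every proper ideal is weakly $1$-absorbing prime by Theorem \ref{tql}, and if $A=F_{1}\times F_{2}$ with $F_{1},F_{2}$ fields, this is precisely the implication $(ii)\Rightarrow(i)$ of Theorem \ref{tcarr} with $n=2$. So I concentrate on $(i)\Rightarrow(ii)$.

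For $(i)\Rightarrow(ii)$, I first apply Theorem \ref{tmax} to get $\left\vert Max(A)\right\vert \leq 2$, and since every ring has a maximal ideal, either $\left\vert Max(A)\right\vert =1$ or $\left\vert Max(A)\right\vert =2$. If $\left\vert Max(A)\right\vert =1$, then $A$ is quasi-local with maximal ideal $\mathfrak{m}$, and Theorem \ref{tql} converts hypothesis (i) into $\mathfrak{m}^{3}=(0)$, giving the first alternative of (ii). If $\left\vert Max(A)\right\vert =2$, write $Max(A)=\{\mathfrak{m}_{1},\mathfrak{m}_{2}\}$; these are comaximal, so $N:=\mathfrak{m}_{1}\cap\mathfrak{m}_{2}=\mathfrak{m}_{1}\mathfrak{m}_{2}=Jac(A)$ and, by the Chinese Remainder Theorem, $A/N\cong A/\mathfrak{m}_{1}\times A/\mathfrak{m}_{2}=F_{1}\times F_{2}$ is a product of two fields. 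Thus the forward direction reduces to the single claim that $N=0$, whence $A\cong F_{1}\times F_{2}$ and the second alternative of (ii) holds.

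To prove $N=0$ I argue by contradiction, assuming $N=\mathfrak{m}_{1}\mathfrak{m}_{2}\neq 0$ and producing a proper ideal that is not weakly $1$-absorbing prime. The plan has two steps. First I locate a single nonzero product with the correct membership pattern: since $\mathfrak{m}_{1}\mathfrak{m}_{2}\neq 0$ there are $a_{0}\in\mathfrak{m}_{1}$, $b_{0}\in\mathfrak{m}_{2}$ with $a_{0}b_{0}\neq 0$, and fixing $u_{1}\in\mathfrak{m}_{1}\setminus\mathfrak{m}_{2}$, $u_{2}\in\mathfrak{m}_{2}\setminus\mathfrak{m}_{1}$ I can replace $a_{0},b_{0}$ by suitable adjustments (among $a_{0}+u_{1}$ and $u_{1}$, both in $\mathfrak{m}_{1}\setminus\mathfrak{m}_{2}$, the two products with $b_{0}$ differ by $a_{0}b_{0}\neq 0$, so one survives, and symmetrically for $b_{0}$) to obtain $a\in\mathfrak{m}_{1}\setminus\mathfrak{m}_{2}$ and $b\in\mathfrak{m}_{2}\setminus\mathfrak{m}_{1}$ with $c:=ab\neq 0$. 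Second I insert a third nonunit using the colon ideal $(0:c)$: either some $y\in\mathfrak{m}_{2}\setminus\mathfrak{m}_{1}$ satisfies $yc\neq 0$, in which case I test the triple $(b,y,a)$ against $P=A(yc)$, or some $y\in\mathfrak{m}_{1}\setminus\mathfrak{m}_{2}$ satisfies $yc\neq 0$, in which case I test $(a,y,b)$ against $P=A(yc)$. At least one alternative occurs, for otherwise $\mathfrak{m}_{1}\setminus\mathfrak{m}_{2}$ and $\mathfrak{m}_{2}\setminus\mathfrak{m}_{1}$ both lie in $(0:c)$; since each of these sets additively generates its maximal ideal, this would force $(0:c)\supseteq\mathfrak{m}_{1}+\mathfrak{m}_{2}=A$, i.e.\ $c=0$, a contradiction.

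In either alternative the triple $(x,y,z)$ has $xyz=yc\neq 0$ lying in $N$, so $P=A(xyz)\subseteq N\subseteq\mathfrak{m}_{1}\cap\mathfrak{m}_{2}$; its two outer factors lie outside one of $\mathfrak{m}_{1},\mathfrak{m}_{2}$ and its remaining factor outside the other. Primality of $\mathfrak{m}_{1}$ and $\mathfrak{m}_{2}$ then yields $xy\notin P$ and $z\notin P$ although $0\neq xyz\in P$, contradicting that $P$ is weakly $1$-absorbing prime; hence $N=0$. I expect this construction to be the main obstacle: the naive triple built directly from $a$ and $b$ may vanish, and it is precisely the dichotomy on $(0:c)$ that guarantees a surviving third factor on one side or the other. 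One could alternatively organize the two–maximal case through the structural dichotomy of Proposition \ref{pc1}, but the annihilator argument above avoids that split entirely.
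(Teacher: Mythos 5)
Your proof is correct, and in the decisive case it takes a genuinely different route from the paper. The shared skeleton is identical: Theorem \ref{tmax} gives $\left\vert Max(A)\right\vert \leq 2$, Theorem \ref{tql} handles the quasi-local case, and Theorems \ref{tql} and \ref{tcarr} give $(ii)\Rightarrow(i)$, exactly as in the paper. The divergence is in the case $Max(A)=\{\mathfrak{m}_{1},\mathfrak{m}_{2}\}$: the paper invokes Proposition \ref{pc1} to get $Jac(A)^{3}=\mathfrak{m}_{1}^{3}\mathfrak{m}_{2}^{3}=(0)$, applies the Chinese Remainder Theorem to write $A\cong A/\mathfrak{m}_{1}^{3}\times A/\mathfrak{m}_{2}^{3}$, and then cites Theorem \ref{tcarr} once more to conclude both factors are fields; you instead prove the stronger statement $Jac(A)=\mathfrak{m}_{1}\mathfrak{m}_{2}=(0)$ directly, via the dichotomy on $(0:c)$ together with the observation that $\mathfrak{m}_{i}\setminus\mathfrak{m}_{j}$ additively generates $\mathfrak{m}_{i}$, which produces a principal ideal $A(yc)$ and a triple of nonunits violating the weakly 1-absorbing property; CRT then yields $A\cong F_{1}\times F_{2}$ in one step. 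Your route is more self-contained (it bypasses Proposition \ref{pc1} entirely and uses Theorem \ref{tcarr} only in the easy direction), at the price of a delicate element-wise construction; the paper's route is shorter because it recycles machinery already established. One small repair is needed in your Step 1: the parenthetical claim that $a_{0}+u_{1}$ and $u_{1}$ both lie in $\mathfrak{m}_{1}\setminus\mathfrak{m}_{2}$ is only guaranteed when $a_{0}\in\mathfrak{m}_{2}$; if instead $a_{0}\notin\mathfrak{m}_{2}$, no adjustment is required since $a=a_{0}$ already works. Splitting into these two cases (and arguing symmetrically for $b_{0}$) makes the step airtight, and the rest of your argument---the dichotomy forcing one of the two alternatives, and the use of primality of $\mathfrak{m}_{1},\mathfrak{m}_{2}$ to verify $xy\notin P$ and $z\notin P$---is sound as written.
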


\begin{proof}
$(i)\Rightarrow(ii):\ $Suppose that every proper ideal of $A\ $is a weakly
1-absorbing prime ideal. Then by Theorem \ref{tmax}, $\left\vert
Max(A)\right\vert \leq2.$ Suppose that $\left\vert Max(A)\right\vert
=1,\ $that is, $(A,\mathfrak{m})$ is a quasi-local ring.\ Then by Theorem
\ref{tql}, $\mathfrak{m}^{3}=(0).$ Now, suppose that $Max(A)=\{\mathfrak{m}%
_{1},\mathfrak{m}_{2}\}.\ $Then by Proposition \ref{pc1}, we have
$Jac(A)^{3}=\mathfrak{m}_{1}^{3}\mathfrak{m}_{2}^{3}=(0).\ $By Chinese
Remainder Theorem, we conclude that $R=\left(  R/\mathfrak{m}_{1}^{3}\right)
\times\left(  R/\mathfrak{m}_{2}^{3}\right)  .\ $So by Theorem \ref{tcarr},
$\left(  R/\mathfrak{m}_{1}^{3}\right)  \ $and $\left(  R/\mathfrak{m}_{2}%
^{3}\right)  $ are fields.

$(ii)\Rightarrow(i):\ $Suppose first that $(A,\mathfrak{m})$ is a quasi-local
ring with $\mathfrak{m}^{3}=(0).\ $Then by Theorem \ref{tql}, every proper
ideal is weakly 1-absorbing prime. Now, assume that $A=F_{1}\times F_{2}%
,\ $where $F_{1},F_{2}\ $are fields. Then by Theorem \ref{tcarr}, every proper
ideal is weakly 1-absorbing prime ideal.
\end{proof}

\begin{example}
Let $A=%
\mathbb{Z}
_{n},\ $where $n\geq2\ $is an integer. Then by previous theorem, every ideal
of $A\ $is weakly 1-absorbing prime if and only if $n=p^{3}\ $or $n=p_{1}%
p_{2}\ $for some prime numbers $p$ and $p_{1}\neq p_{2}.$
\end{example}

\section{Weakly 1-absorbing prime $z$-ideals in $C(X)$}

Let $X\ $be a topological space and $C(X)\ $be the ring of all real valued
continuous functions on a topological space $X.\ $Note that $C(X)\ $is a
commutative ring with a nonzero identity $i(x)=1\ $for each $x\in X.\ $It is
known that for each topological space $X,\ C(X)$ is isomorphic to $C(Y)\ $for
some completely regular space $Y$ \cite{GilJe}$.\ $So from now on, we assume
that $X\ $is a completely regular topological space. We refer \cite{GilJe} to
the reader for more details on $C(X).\ $The notion of $z$-filter has an
important role in studying the algebraic properties of $C(X).$ Let $f\in
C(X).\ $The zero set $f$ of $C(X)\ $is denoted by $z(f)=\{x\in X:f(x)=0\}$ and
also the set of all zero sets in $X\ $is denoted by $z(X).\ $Recall
from$\ $\cite{GilJe} that a nonempty subset $\mathcal{F}\subseteq z(X)\ $is
said to be a $z$-filter if the following conditions hold: (i) $\emptyset
\notin$ $\mathcal{F}$(ii)\ $Z_{1}\cap Z_{2}\in\mathcal{F}$ for each
$Z_{1},Z_{2}\in\mathcal{F}$ and (iii)\ $Z_{1}\subseteq Z,\ Z_{1}%
\in\mathcal{F}$ and $Z\in z(X)\ $imply that $Z\in\mathcal{F}$. Let $I\ $be
an ideal of $C(X).\ $Then the set $z[I]=\{z(f):f\in I\}\ $is an example of
$z$-filter. Also, if $\mathcal{F}$ is a $z$-filter, then $z^{-1}%
[\mathcal{F}]=\{f\in C(X):z(f)\in\mathcal{F}\}$ is an ideal of $C(X).\ $ For an ideal $I\ $of $C(X),\ I\subseteq z^{-1}[z[I]]\ $always holds
but the converse is not true in general (See, \cite{GilJe}).\ Recall from
\cite{GilJe} that an ideal $I\ $in $C(X)\ $is said to be a $z$-ideal if
$I=z^{-1}[z[I]]$, or equivalently, $z(f)\in z[I]\ $implies that $f\in I.$ Note
that every $z$-ideal $I\ $of $C(X)\ $is a radical ideal, that is, $I=\sqrt
{I}.\ $

\begin{definition}
(\cite{GilJe})\ A $z$-filter $\mathcal{F}$ is called a prime $z$-filter if
$Z_{1}\cup Z_{2}\in\mathcal{F}$ for some $Z_{1},Z_{2}\in z(X)\ $implies
either $Z_{1}\in\mathcal{F}$ or $Z_{2}\in\mathcal{F}$
\end{definition}

Note that there is a correspondence between the prime $z$-filters and the
prime $z$-ideals in $C(X)$ (See, \cite{GilJe}). Recently, the authors in \cite{BilDemOr},
introduced the concept of 2-absorbing $z$-filters and used them to study
2-absorbing ideals in $C(X).\ $Recall from \cite{BilDemOr} that a $z$-filter
$\mathcal{F}$ is called a 2-absorbing $z$-filter if $Z_{1}\cup Z_{2}\cup
Z_{3}\in\mathcal{F}$ for some $Z_{1},Z_{2},Z_{3}\in z(X)\ $implies either
$Z_{1}\cup Z_{2}\in\mathcal{F}$ or $Z_{2}\cup Z_{3}\in\mathcal{F}$ or
$Z_{1}\cup Z_{3}\mathcal{F}$. Note that every prime $z$-filter is clearly a
2-absorbing $z$--filter but the converse is not true in general. For instance,
in $C(%
\mathbb{R}
),\ \mathcal{F}=\{Z\in z(%
\mathbb{R}
):\{0,\frac{\pi}{2}\}\subseteq Z\}\ $is not a prime $z$-filter because $z(\sin
x)\cup z(\cos x)\in\mathcal{F}$ but $z(\sin x),z(\cos x)\notin\mathcal{F}%
$Also, one can easily see that $\mathcal{F}$ is a 2-absorbing $z$-filter. The
authors in \cite{BilDemOr} showed that if $P\ $is a 2-absorbing ideal of
$C(X),\ $then $z[P]\ $is a 2-absorbing $z$-filter and also if $\mathcal{F}$
is a 2-absorbing $z$-filter, then $z^{-1}[\mathcal{F}]\ $is a 2-absorbing
$z$-ideal of $C(X)\ $(See, \cite[Theorem 7]{BilDemOr}).

In this section, we investigate weakly 1-absorbing prime $z$-ideal, weakly
prime $z$-ideal in $C(X).\ $Darani in his paper \cite{Dar}, gave a
generalization of prime $z$-filters to study weakly prime ideals in
$C(X).\ $Recall from \cite{Dar} that a $z$-filter $\mathcal{F}$ is said to be
a weakly prime $z$-filter if $X\neq Z_{1}\cup Z_{2}\in\mathcal{F}$ for some
$Z_{1},Z_{2}\in z(X)\ $implies either $Z_{1}\in\mathcal{F}$ or $Z_{2}%
\in\mathcal{F}$.It is clear that every prime $z$-filter is a weakly prime
$z$-filter. But the following example shows that the converse need not be true.

\begin{example}
Consider the ring $C(%
\mathbb{R}
)$ of all real valued continuous functions on $%
\mathbb{R}
.\ $Let $\mathfrak{F}=\{%
\mathbb{R}
\}.\ $Then $\mathcal{F}$ is trivially weakly prime $z$-filter. Take the
following functions:
\[
f(x)=%
\genfrac{\{}{.}{0pt}{}{-\sin x\ ;\ x\in(-\infty,0)}{0\ ;\ x\in\lbrack
0,\infty)}%
\ \text{and }g(x)=%
\genfrac{\{}{.}{0pt}{}{0\ ;\ x\in(-\infty,0)}{\sin x\ ;\ x\in\lbrack0,\infty)}%
.
\]
Since $0=f(x)g(x),\ $we have $z(fg)=z(f)\cup z(g)=%
\mathbb{R}
\in$ $\mathcal{F\ }$but $Z(f)\notin\mathcal{F}$ and $Z(g)\notin\mathcal{F}$.
Thus $\mathcal{F}$\ is not a prime z-filter.
\end{example}

Darani showed that if $P\ $is weakly prime $z$-ideal of $C(X),\ $then
$z[P]\ $is a weakly prime $z$-filter and also if $\mathcal{F}$ is weakly prime
$z$-filter, then $z^{-1}[\mathcal{F}]\ $is a weakly prime $z$-ideal of
$C(X).\ $Now, we will show that any weakly prime $z$-ideal and weakly
1-absorbing $z$-ideals coincide in $C(X).\ $Now, we give the following Lemma
which is needed in the sequel.

\begin{definition}
Let $\mathcal{F}$ be a weakly prime $z$-filter which is not prime $z$-filter.
Then there exists $Z_{1},Z_{2}\in z(X)\ $such that $Z_{1}\cup Z_{2}=X\ $but
$Z_{1}\notin\mathcal{F}$ and $Z_{2}\notin\mathcal{F}$.\ In this case, we say
$(Z_{1},Z_{2})\ $is a double of $\mathcal{F}$.
\end{definition}

\begin{lemma}
Suppose that $\mathcal{F}$ is a weakly prime $z$-filter which is not
prime\ and $(Z_{1},Z_{2})\ $is a double of $\mathcal{F}$.\ Then $\{Z_{1}%
\}\cup\mathcal{F}\ =\{X\}=\ \{Z_{2}\}\cup\mathcal{F}.\ $
\end{lemma}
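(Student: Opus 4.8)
The plan is to read the displayed equality through the evident ``join'' convention $\{Z_1\}\cup\mathcal{F}=\{Z_1\cup Z : Z\in\mathcal{F}\}$, so that the assertion becomes: $Z_1\cup Z=X$ for every $Z\in\mathcal{F}$, and likewise $Z_2\cup Z=X$ for every $Z\in\mathcal{F}$. Since $Z_1$ and $Z_2$ enter the definition of a double symmetrically, it suffices to establish the first equality and then repeat the argument with the roles of $Z_1$ and $Z_2$ interchanged. I would fix $Z\in\mathcal{F}$ and argue by contradiction, assuming $Z_1\cup Z\neq X$. The first move is to pass from zero sets to functions: choose $f,g,h\in C(X)$ with $Z_1=z(f)$, $Z_2=z(g)$ and $Z=z(h)$, which is legitimate since every member of $z(X)$ is by definition a zero set. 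The hypothesis that $(Z_1,Z_2)$ is a double gives $Z_1\cup Z_2=X$, and via $z(fg)=z(f)\cup z(g)$ this reads $z(fg)=X$, i.e.\ $fg=0$ in $C(X)$.

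This is exactly where I expect the main obstacle to lie, and where the additive structure of $C(X)$ becomes indispensable: a purely filter-theoretic attempt fails, because applying the weakly prime property directly to $Z_1\cup Z\in\mathcal{F}$ only returns the vacuous alternative $Z\in\mathcal{F}$. To force a genuine contradiction one must perturb, mimicking the twin-zero trick used for weakly prime ideals. Concretely, I would form the product $f(g+h)=fg+fh=fh$, using $fg=0$. Its zero set is $z(f(g+h))=z(fh)=z(f)\cup z(h)=Z_1\cup Z$, which lies in $\mathcal{F}$ by upward closure (it contains the filter member $Z$) and, by the standing assumption, is different from $X$.

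Now I would apply the weakly prime $z$-filter property to $X\neq z(f)\cup z(g+h)=Z_1\cup Z\in\mathcal{F}$: since $Z_1=z(f)\notin\mathcal{F}$, it follows that $z(g+h)\in\mathcal{F}$. Intersecting with $Z=z(h)\in\mathcal{F}$ and invoking closure of $\mathcal{F}$ under finite intersections gives $z(g+h)\cap z(h)\in\mathcal{F}$. On this set both $g+h$ and $h$ vanish, hence so does $g$, so $z(g+h)\cap z(h)\subseteq z(g)=Z_2$, and upward closure forces $Z_2\in\mathcal{F}$, contradicting that $(Z_1,Z_2)$ is a double. Thus $Z_1\cup Z=X$; running the identical computation with $g(f+h)=gh$ in place of $f(g+h)$ yields $Z_2\cup Z=X$, which is precisely $\{Z_1\}\cup\mathcal{F}=\{X\}=\{Z_2\}\cup\mathcal{F}$. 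Once the perturbation $f(g+h)=fh$ is in hand, the remaining steps are routine manipulations with upward closure and finite intersections.
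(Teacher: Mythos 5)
Your proof is correct, but it takes a genuinely different route from the paper's. (Your reading of the notation, $\{Z_{1}\}\cup\mathcal{F}=\{Z_{1}\cup Z:Z\in\mathcal{F}\}$, is indeed the intended one.) Both arguments fix $Z\in\mathcal{F}$ with $Z_{1}\cup Z\neq X$ and then re-express $Z_{1}\cup Z$ as a union of two zero sets to which weak primeness can be applied nontrivially; the difference is how that decomposition is produced. The paper stays entirely at the level of sets: by distributivity and $Z_{1}\cup Z_{2}=X$,
\[
Z_{1}\cup(Z\cap Z_{2})=(Z_{1}\cup Z)\cap(Z_{1}\cup Z_{2})=Z_{1}\cup Z\in\mathcal{F},
\]
and since $Z\cap Z_{2}$ is again a zero set, weak primeness gives $Z_{1}\in\mathcal{F}$ or $Z\cap Z_{2}\in\mathcal{F}$, the latter forcing $Z_{2}\in\mathcal{F}$ by upward closure; either alternative contradicts $(Z_{1},Z_{2})$ being a double. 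You instead pass to functions $f,g,h$ with $z(f)=Z_{1}$, $z(g)=Z_{2}$, $z(h)=Z$, use $fg=0$ to get $f(g+h)=fh$, and decompose $Z_{1}\cup Z=z(f)\cup z(g+h)$; since $z(g+h)\supseteq z(g)\cap z(h)=Z_{2}\cap Z$, your subsequent steps (intersect with $z(h)$, land inside $z(g)$, apply upward closure) reconstruct by hand what the paper gets in one stroke from distributivity. Both are valid: the paper's version is shorter and shows the lemma is a purely lattice-theoretic fact about weakly prime filters on any family of sets closed under finite unions and intersections, whereas yours makes visible the analogy with the twin-zero perturbation trick for weakly prime ideals, at the cost of invoking the ring structure of $C(X)$. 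One claim you should retract: your assertion that ``a purely filter-theoretic attempt fails'' and that the additive structure of $C(X)$ is ``indispensable'' is wrong --- the obstruction you describe only rules out the trivial decomposition of $Z_{1}\cup Z$ as itself, and the paper's decomposition $Z_{1}\cup(Z\cap Z_{2})$ circumvents it with no appeal to functions at all.
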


\begin{proof}
Assume that $(Z_{1},Z_{2})\ $is a double of $\mathcal{F.\ }$Then $Z_{1}\cup
Z_{2}=X$ and $Z_{1},Z_{2}\notin\mathcal{F.\ }$Now, we will show that
$\{Z_{1}\}\cup\mathcal{F}\ =\{X\}.\ $Suppose that $\{Z_{1}\}\cup
\mathcal{F}\ \neq\{X\}.\ $Then there exists $Z\in\mathcal{F}$ such that
$Z_{1}\cup Z\neq X.\ $Then note that $Z_{1}\cup(Z\cap Z_{2})=(Z_{1}\cup
Z)\cap(Z_{1}\cup Z_{2})=(Z_{1}\cup Z)\cap X=(Z_{1}\cup Z)\in\mathcal{F}%
.\ $Since\ $\mathcal{F\ }$is a weakly prime $z$-filter\ and $X\neq Z_{1}%
\cup(Z\cap Z_{2})\in\mathcal{F},\ $we conclude either $Z_{1}\in\mathcal{F\ }%
$or $(Z\cap Z_{2})\in\mathcal{F}.\ $Since $Z\cap Z_{2}\subseteq Z_{2},\ $we
have either $Z_{1}\in\mathcal{F}$ or $Z_{2}\in\mathcal{F}$\ which is contradiction. Thus $\{Z_{1}\}\cup\mathcal{F}\ =\{X\}.\ $Similar argument
shows $\{Z_{2}\}\cup\mathcal{F}\ =\ \{X\}.$
\end{proof}

\begin{theorem}
\label{tf}Every weakly prime $z$-filter $\mathcal{F}$ is either prime
$z$-filter or $\mathcal{F}=\{X\}.\ $
\end{theorem}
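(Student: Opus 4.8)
The plan is to prove the contrapositive dichotomy: assuming $\mathcal{F}$ is a weakly prime $z$-filter that is \emph{not} prime, I will force $\mathcal{F}=\{X\}$. First I would extract a double. Since $\mathcal{F}$ fails primeness, there are $Z_{1},Z_{2}\in z(X)$ with $Z_{1}\cup Z_{2}\in\mathcal{F}$ but $Z_{1},Z_{2}\notin\mathcal{F}$; because $\mathcal{F}$ is nonetheless weakly prime, the defining implication can break down only when $Z_{1}\cup Z_{2}=X$ (otherwise weak primeness would place $Z_{1}$ or $Z_{2}$ in $\mathcal{F}$). Hence $(Z_{1},Z_{2})$ is a double of $\mathcal{F}$ in the sense of the preceding definition. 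I expect this to be the crux of the argument: recognizing that weak primeness confines every failure of primeness to the single degenerate union $Z_{1}\cup Z_{2}=X$.

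Next I would fix an arbitrary $Z\in\mathcal{F}$ and show $Z=X$. The engine is the decomposition
\[
Z=Z\cap X=Z\cap(Z_{1}\cup Z_{2})=(Z\cap Z_{1})\cup(Z\cap Z_{2}),
\]
in which $Z\cap Z_{1}$, $Z\cap Z_{2}$, and their union all lie in $z(X)$, since zero sets are closed under finite intersection and union. Suppose toward a contradiction that $Z\neq X$. Then the display exhibits $Z$ as a union of two zero sets lying in $\mathcal{F}$ with $Z\neq X$, so weak primeness applies and yields $Z\cap Z_{1}\in\mathcal{F}$ or $Z\cap Z_{2}\in\mathcal{F}$. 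In the first case $Z\cap Z_{1}\subseteq Z_{1}$ together with the upward-closure axiom (iii) of a $z$-filter forces $Z_{1}\in\mathcal{F}$, contradicting that $(Z_{1},Z_{2})$ is a double; the second case symmetrically produces $Z_{2}\in\mathcal{F}$, again impossible.

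Therefore $Z=X$ for every $Z\in\mathcal{F}$. Finally I would note that $\mathcal{F}$ is nonempty, so axiom (iii) applied to any member together with $X=z(0)\in z(X)$ gives $X\in\mathcal{F}$; hence $\mathcal{F}=\{X\}$, which completes the contrapositive and the theorem. The Lemma just above records the companion fact that $Z_{1}\cup Z=X$ for all $Z\in\mathcal{F}$, but I would not strictly need it: it is the decomposition of $Z$ against the double, rather than any statement about $Z_{1}\cup Z$, that drives the conclusion, and everything after extracting the double is a routine application of the filter axioms.
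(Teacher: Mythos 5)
Your proof is correct and follows essentially the same route as the paper's: extract a double $(Z_1,Z_2)$, write an arbitrary $Z\in\mathcal{F}$ as $(Z\cap Z_1)\cup(Z\cap Z_2)$, and apply weak primeness plus upward closure to force $Z=X$. Your side remark is also accurate: although the paper's proof formally cites the preceding lemma, its argument only ever uses $Z_1\cup Z_2=X$ from the definition of a double, so the lemma is indeed dispensable, just as you observe.
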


\begin{proof}
Suppose that $\mathcal{F\ }$is a weakly prime $z$-filter but not a prime
$z$-filter$.$\ Now, we will show that $\mathcal{F}=\{X\}.\ $Suppose to the
contrary. Then there exists $X\neq Z\in\mathcal{F}$.\ Since $\mathcal{F\ }$is
a weakly prime $z$-filter which is not a prime $z$-filter,\ there exists a
double $(Z_{1},Z_{2})$ of $\mathcal{F}$. By the previous lemma, we conclude that
$Z\cup Z_{1}=Z\cup Z_{2}=X.\ $This implies that $X\neq Z=Z\cap X=Z\cap
(Z_{1}\cup Z_{2})=(Z\cap Z_{1})\cup(Z\cap Z_{2})\in\mathcal{F}$.\ Since
$\mathcal{F\ }$is a weakly prime $z$-filter,\ we conclude that $(Z\cap
Z_{1})\in\mathcal{F}$ or $(Z\cap Z_{2})\in\mathcal{F,}$ and thus $Z_{1}%
\in\mathcal{F}$ or $Z_{2}\in\mathcal{F}$\ which are contradictions. Hence
$\mathcal{F}=\{X\}.\ $
\end{proof}

\begin{theorem}
\label{tf2}Suppose that $P\ $is a proper ideal of $C(X)\ $and $\mathcal{F}$ is
a $z$-filter on $X.\ $Then, we have the followings.

(i) \ If $P\ $is a weakly prime ideal of $C(X),\ $then $z[P]=\{X\}\ $or
$z[P]\ $is a prime $z$-filter on $X.$

(ii)\ If $\mathcal{F}$ is a weakly prime $z$-filter on $X,$ then
$z^{-1}[\mathcal{F}]\ $is a prime $z$-ideal or $0=z^{-1}[\mathcal{F}].\ $
\end{theorem}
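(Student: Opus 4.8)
The plan is to run both parts through Theorem \ref{tf}, which already pins down every weakly prime $z$-filter as being either prime or the trivial filter $\{X\}$; all that remains is to translate between the ideal $P$ and the $z$-filter $z[P]$ (respectively between $\mathcal{F}$ and $z^{-1}[\mathcal{F}]$) using the dictionary recalled from \cite{GilJe} and \cite{Dar}. Since $z[\cdot]$ and $z^{-1}[\cdot]$ interchange unions of zero-sets with products, i.e. $z(fg)=z(f)\cup z(g)$, the weakly prime condition on one side ought to correspond to the weakly prime $z$-filter condition on the other, after which Theorem \ref{tf} finishes each part.

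For (i) I would first note that $z[P]$ is a $z$-filter for every proper ideal $P$, and then try to show it is a \emph{weakly prime} $z$-filter: given $X\neq z(f)\cup z(g)=z(fg)\in z[P]$ with a witness $h\in P$ satisfying $z(h)=z(f)\cup z(g)$, the goal is $z(f)\in z[P]$ or $z(g)\in z[P]$. The delicate point is that $P$ need not be a $z$-ideal, so equality of zero-sets $z(fg)=z(h)$ does not by itself place $fg$ in $P$. I would remove this obstacle by a reduced-ring dichotomy: recalling from \cite{AnSmi} that a weakly prime ideal which is not prime squares to zero, and that $C(X)$ is reduced, $P$ is forced to be either prime or $(0)$. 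If $P=(0)$ then $z[P]=\{z(0)\}=\{X\}$ and the conclusion holds outright. If $P$ is prime, then $z[P]$ is a prime $z$-filter by the standard prime-ideal/prime-$z$-filter correspondence of \cite{GilJe}, in particular weakly prime, so Theorem \ref{tf} gives that $z[P]$ is prime or $\{X\}$. (Equivalently, once $z[P]$ is known to be a weakly prime $z$-filter one may simply quote Theorem \ref{tf}.)

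Part (ii) is the cleaner, dual direction. Here $\mathcal{F}$ is by hypothesis a weakly prime $z$-filter, so Theorem \ref{tf} applies directly: either $\mathcal{F}=\{X\}$ or $\mathcal{F}$ is a prime $z$-filter. In the first case $z^{-1}[\mathcal{F}]=\{f\in C(X):z(f)=X\}=(0)$. In the second case $z^{-1}[\mathcal{F}]$ is automatically a $z$-ideal, and it is prime by a one-line check: if $fg\in z^{-1}[\mathcal{F}]$ then $z(f)\cup z(g)=z(fg)\in\mathcal{F}$, whence primeness of $\mathcal{F}$ yields $z(f)\in\mathcal{F}$ or $z(g)\in\mathcal{F}$, i.e. $f\in z^{-1}[\mathcal{F}]$ or $g\in z^{-1}[\mathcal{F}]$. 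Thus $z^{-1}[\mathcal{F}]$ is a prime $z$-ideal.

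The main obstacle is the transfer step in part (i): because a weakly prime ideal $P$ need not be a $z$-ideal, one cannot infer membership in $P$ from equality of zero-sets, so $z[P]$ is not obviously a weakly prime $z$-filter. The reduced-ring observation (weakly prime $\Rightarrow$ prime or $(0)$ in $C(X)$) is precisely what bridges this gap, reducing part (i) to the classical prime-ideal/prime-$z$-filter correspondence. Part (ii) carries no such difficulty, since $z^{-1}[\mathcal{F}]$ is a $z$-ideal by construction.
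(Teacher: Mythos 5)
Your proof is correct and follows essentially the same route as the paper: part (i) via the reduced-ring dichotomy (a weakly prime ideal of the reduced ring $C(X)$ is prime or zero) followed by the prime-ideal/prime-$z$-filter correspondence of \cite{GilJe}, and part (ii) by applying Theorem \ref{tf} and then transferring primeness of $\mathcal{F}$ to $z^{-1}[\mathcal{F}]$. The only cosmetic differences are that you prove the transfer in (ii) inline rather than citing \cite{GilJe}, and the paper reaches the prime $z$-filter conclusion in (i) through the prime $z$-ideal $z^{-1}[z[P]]$ rather than quoting the correspondence directly.
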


\begin{proof}
(i):\ Suppose that $P\ $is a weakly prime ideal of $C(X).\ $Since $C(X)\ $is
reduced ring, $P=0\ $or $P\ $is a prime ideal of $C(X).\ $If $P=0,\ $then
$z[P]=\{X\}$. Now assume that $P\ $is a prime ideal of $C(X).\ $Since
$z^{-1}[z[P]]\ $contains the prime ideal $P,\ z^{-1}[z[P]]$ is a prime
$z$-ideal and so $z[P]\ $is a prime $z$-filter on $X$ by \cite{GilJe}$.\ $

(ii):\ Suppose that $\mathcal{F}$ is a weakly prime $z$-filter on $X.\ $Then
by Theorem \ref{tf}, $\mathcal{F}$ is either prime $z$-filter or
$\mathcal{F}=\{X\}.\ $If $\mathcal{F}=\{X\},\ $then $0=z^{-1}[\mathcal{F}].$
So assume that $\mathcal{F}$ is a prime $z$-filter. Then by \cite{GilJe}%
,\ $z^{-1}[\mathcal{F}]\ $is a prime $z$-ideal,as desired.
\end{proof}

\begin{theorem}
\label{tcon}Suppose that $P\ $is a proper ideal of $C(X).\ $Then the following
statements hold.

(i)\ If $P\ $is a weakly 1-absorbing prime $z$-ideal, then $z[P]\ $is a weakly
prime $z$-filter.

(ii)\ If $\mathcal{F}$ is a weakly prime $z$-filter on $X,\ $then
$z^{-1}[\mathcal{F}]$ is a weakly 1-absorbing prime $z$-ideal.

(iii)\ $P\ $is a weakly 1-absorbing prime $z$-ideal if and only if $P\ $is a
weakly prime $z$-ideal if and only if either $P=(0)\ $or $P\ $is a prime $z$-ideal.
\end{theorem}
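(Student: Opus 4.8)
The plan is to establish part (iii) first, since parts (i) and (ii) then follow immediately by combining (iii) with Darani's correspondence \cite{Dar} between weakly prime $z$-ideals and weakly prime $z$-filters. The two facts that drive everything are that $C(X)$ is a reduced ring and that every $z$-ideal is radical, so that the passage to $\sqrt{P}$ in Theorem \ref{tred} costs nothing and lets me convert ``weakly 1-absorbing prime'' statements into honest primeness statements.

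For (iii), I would prove the two chains of equivalences separately. First, the equivalence between being a weakly 1-absorbing prime $z$-ideal and being a weakly prime $z$-ideal: one direction is Example \ref{exw}, since every weakly prime ideal is weakly 1-absorbing prime. For the reverse, suppose $P$ is a weakly 1-absorbing prime $z$-ideal. Because $C(X)$ is reduced, Theorem \ref{tred} gives that $\sqrt{P}$ is weakly prime; but $P$ is a $z$-ideal, hence radical, so $P=\sqrt{P}$ is weakly prime. This already links the first two conditions.

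Next I would connect ``weakly prime $z$-ideal'' to ``$P=(0)$ or $P$ is a prime $z$-ideal.'' For the forward direction I invoke the reduced-ring dichotomy recorded in the proof of Theorem \ref{tf2}(i): a weakly prime ideal of a reduced ring is either $(0)$ or genuinely prime. If $P=(0)$ we are in the first case; if $P$ is prime then, being a $z$-ideal, it is a prime $z$-ideal. The reverse direction is routine, since $(0)$ is vacuously weakly prime and is itself a $z$-ideal, while every prime $z$-ideal is in particular a weakly prime $z$-ideal. Assembling these two biconditionals yields (iii).

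Finally I deduce (i) and (ii). For (i), if $P$ is a weakly 1-absorbing prime $z$-ideal then by (iii) it is a weakly prime $z$-ideal, and Darani's result \cite{Dar} then yields that $z[P]$ is a weakly prime $z$-filter. For (ii), if $\mathcal{F}$ is a weakly prime $z$-filter then $z^{-1}[\mathcal{F}]$ is a weakly prime $z$-ideal by \cite{Dar}, and (iii) upgrades this to a weakly 1-absorbing prime $z$-ideal. The only genuine obstacle is the reduced-ring dichotomy ``weakly prime $\Rightarrow$ zero or prime,'' which is exactly what forces the three notions to collapse onto one another in $C(X)$; once that is in hand, everything else is bookkeeping with radicals and the standard $z$-ideal/$z$-filter correspondence.
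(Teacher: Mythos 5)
Your proposal is correct, and it rearranges the logic in a genuinely different way from the paper. The paper proves (i) directly (Theorem \ref{tred} plus $P=\sqrt{P}$ gives that $P$ is weakly prime, then its own Theorem \ref{tf2}(i) gives the filter statement), proves (ii) directly from Theorem \ref{tf2}(ii), and only then obtains (iii) by a round trip through $z$-filters: $P$ weakly 1-absorbing prime $\Rightarrow z[P]$ weakly prime $z$-filter $\Rightarrow z^{-1}[z[P]]=P$ is $(0)$ or a prime $z$-ideal. You instead prove (iii) first, entirely at the level of ideals --- Theorem \ref{tred} plus radicality collapses weakly 1-absorbing prime to weakly prime, and the Anderson--Smith dichotomy (a weakly prime ideal of a reduced ring is $(0)$ or prime, which the paper also invokes without proof inside Theorem \ref{tf2}(i)) collapses weakly prime to $(0)$-or-prime --- and then you get (i) and (ii) as corollaries of (iii) via Darani's cited correspondence rather than via the paper's Theorems \ref{tf} and \ref{tf2}. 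Your route is cleaner for (iii), since it avoids the detour through filters; what the paper's route buys is self-containedness (its Theorem \ref{tf2} is proved in the paper, whereas you lean on the result quoted from \cite{Dar}) and slightly stronger intermediate information, namely that $z[P]$ is either $\{X\}$ or an honest prime $z$-filter, not merely weakly prime. Both arguments ultimately rest on the same two pillars, so there is no gap; just note explicitly, as you implicitly do, that $(0)$ is itself a $z$-ideal (indeed $z^{-1}[z[(0)]]=\{f:z(f)=X\}=(0)$), which is needed for the reverse implications in (iii).
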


\begin{proof}
(i):\ Suppose that $P\ $is a weakly 1-absorbing prime $z$-ideal.\ Since
$P\ $is a $z$-ideal, we conclude that $P=\sqrt{P}.\ $As $P\ $is a weakly
1-absorbing prime ideal and $C(X)\ $is a reduced ring, by Theorem \ref{tred},
$P\ $is a weakly prime $z$-ideal of $C(X).\ $Then by Theorem \ref{tf2}, we
conclude that $z[P]\ $is a weakly prime $z$-filter.

(ii):\ Suppose that $\mathcal{F}$ is a weakly prime $z$-filter on $X.\ $Then
by Theorem \ref{tf2}, $z^{-1}[\mathcal{F}]\ $is a prime $z$-ideal or
$0=z^{-1}[\mathcal{F}].\ $In both cases, $z^{-1}[\mathcal{F}]$ is a weakly
1-absorbing prime $z$-ideal.

(iii):\ The implication $P=(0)\ $or $P\ $is a prime $z$-ideal $\Rightarrow$
$P\ $is a weakly prime $z$-ideal $\Rightarrow$\ $P\ $is a weakly 1-absorbing
prime $z$-ideal is clear. Now, let $P\ $be a weakly 1-absorbing prime
$z$-ideal of $C(X).\ $Then by (i),\ $z[P]\ $is a weakly prime $z$-filter. Then
$z^{-1}[z[P]]=P$ is a prime $z$-ideal or $P=(0)\ $by Theorem \ref{tf2}.\ 
\end{proof}

\end{document}